\documentclass{gtart_a}
\pdfoutput=1

\usepackage{graphicx}


\title[Hyperbolic Coxeter groups]{Infinitely many hyperbolic Coxeter
  groups\\through dimension 19}

\author{Daniel Allcock}
\givenname{Daniel}
\surname{Allcock}
\address{Department of Mathematics\\
University of Texas at Austin\\\newline
Austin, TX 78712\\USA}
\email{allcock@math.utexas.edu}
\urladdr{http://www.math.utexas.edu/~allcock}

\volumenumber{10}
\issuenumber{}
\publicationyear{2006}
\papernumber{21}
\lognumber{0606}
\startpage{737}
\endpage{758}

\doi{}
\MR{}
\Zbl{}

\arxivreference{}   
\arxivpassword{}   

\keyword{Coxeter group}
\keyword{Coxeter polyhedron}
\keyword{Leech lattice}
\keyword{redoublable polyhedon}
\subject{primary}{msc2000}{20F55}
\subject{secondary}{msc2000}{51M20}
\subject{secondary}{msc2000}{51M10}

\received{28 April 2005}
\revised{8 August 2005}
\accepted{16 January 2006}
\proposed{Martin Bridson}
\seconded{Benson Farb, Walter Neumann}
\published{11 July 2006}
\publishedonline{11 July 2006}
\corresponding{}
\editor{}
\version{}



\AtBeginDocument{\let\tilde\wtilde\let\hat\what}


\makeatletter
\def\cnewtheorem#1[#2]#3{\newtheorem{#1}{#3}[section]
\expandafter\let\csname c@#1\endcsname\c@theorem}


\theoremstyle{plain}
\newtheorem{theorem}{Theorem}[section]
\cnewtheorem{lemma}[theorem]{Lemma}
\cnewtheorem{corollary}[theorem]{Corollary}

\numberwithin{equation}{section}
\numberwithin{table}{section}
\numberwithin{figure}{section}

\theoremstyle{definition}
\cnewtheorem{example}[theorem]{Example}

\theoremstyle{remark}
\newtheorem*{remarks}{Remarks}

\makeatother  

\newcommand{\conway}{\mathop{Co}\nolimits}
\newcommand{\leech}{\Lambda}
\newcommand{\F}{{\bf F}}
\renewcommand{\D}{\Delta}
\newcommand{\sset}{\subseteq}

\newcommand{\I}{{\rm I}}
\newcommand\Ikern{\kern-1pt}
\newcommand{\II}{{\rm I\Ikern I}}
\newcommand{\III}{{\rm I\Ikern I\Ikern I}}

\newcommand{\solid}{\includegraphics{\figdir/ext_a}}
\newcommand{\hollow}{\includegraphics{\figdir/ext_d1}}
\newcommand{\hollowcross}{\includegraphics{\figdir/ext_d2}}
\newcommand{\hollowdot}{\includegraphics{\figdir/ext_d3}}
\newcommand{\hollowbox}{\includegraphics{\figdir/ext_e1}}
\newcommand{\boxcross}{\includegraphics{\figdir/ext_e2}}


\begin{document}

\begin{asciiabstract}
We prove the following: there are infinitely many finite-covolume
(resp. cocompact) Coxeter groups acting on hyperbolic space H^n for
every n < 20 (resp. n < 7).  When n=7 or 8, they may be taken to be
nonarithmetic.  Furthermore, for 1 < n < 20, with the possible
exceptions n=16 and 17, the number of essentially distinct Coxeter
groups in H^n with noncompact fundamental domain of volume less than
or equal to V grows at least exponentially with respect to V.  The
same result holds for cocompact groups for n < 7.  The technique is a
doubling trick and variations on it; getting the most out of the
method requires some work with the Leech lattice.
\end{asciiabstract}

\begin{htmlabstract} 
We prove the following: there are infinitely many finite-covolume
(resp. cocompact) Coxeter groups acting on hyperbolic space H<sup>n</sup>
for every n&le;19 (resp. n&le;6).  When n=7 or 8, they may be taken to
be nonarithmetic.   Furthermore, for 2&le;n&le;19, with the possible
exceptions n=16 and&nbsp;17, the number of essentially distinct Coxeter
groups in H<sup>n</sup> with noncompact fundamental domain of volume&le;V
grows at least exponentially with respect to V.  The same result holds
for cocompact groups for n&le;6.  The technique is a doubling trick and
variations on it; getting the most out of the method requires some work
with the Leech lattice.
\end{htmlabstract}

\begin{abstract} 
We prove the following: there are infinitely many finite-covolume
(resp.\ cocompact) Coxeter groups acting on hyperbolic space $H^n$ for
every $n\leq19$ (resp.\ $n\leq6$).  When $n=7$ or~$8$, they may be
taken to be nonarithmetic.   Furthermore, for $2\leq n\leq19$, with the
possible exceptions $n=16$
and~$17$, the number of essentially distinct Coxeter groups in $H^n$
with noncompact fundamental domain of volume${}\leq V$ grows at least
exponentially with respect to $V$.  The same result holds for
cocompact groups for $n\leq6$.
The technique is a doubling
trick and variations on it; getting the most out of the method
requires some work with the Leech lattice.
\end{abstract}

\maketitle

\section{Introduction}
\label{sec-intro}

The purpose of this paper is to prove the following theorems.  Recall
that a Coxeter polyhedron in hyperbolic space $H^n$ is the natural
fundamental domain for a Coxeter group, ie, it is a convex
polyhedron with all dihedral angles being integral submultiples of
$\pi$. 

\begin{theorem}
\label{thm-infinitely-many}
There are infinitely many isometry classes of finite-volume Coxeter
polyhedra in $H^n$, for every $n\leq19$.  For
$2\leq n\leq6$, they may be taken to be either compact or noncompact, and
for $n=7$ or~$8$, they may be taken to be either arithmetic or nonarithmetic.
\end{theorem}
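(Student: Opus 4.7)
The plan is to produce, for each dimension $n\leq 19$, a single finite-volume Coxeter polyhedron $P$ in $H^n$ that admits a \emph{doubling} — that is, one of its facets $F$ is perpendicular to every adjacent facet, so that the union of $P$ with its reflected copy across $F$ is again a Coxeter polyhedron $P'$. If $P'$ in turn admits a doubling, one can iterate, and the sequence of volumes grows unboundedly. So the guiding slogan is: \emph{redoublable} Coxeter polyhedra generate infinite families, and one only needs to exhibit a redoublable polyhedron in every relevant dimension. Distinctness of the members of the sequence is easy to verify, for instance via volume.

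First I would collect or construct a seed polyhedron $P_n$ in $H^n$ for each $2\leq n\leq 19$. In low dimensions there is a rich existing catalogue (Vinberg, Kaplinskaja, Bugaenko, Makarov, etc.), and the task reduces to identifying polyhedra whose Coxeter diagram displays a facet orthogonal to all its neighbors. For dimensions near the middle of the range I would lean on the arithmetic reflection groups of the Lorentzian lattices $\II_{1,n}$ described by Vinberg and Kaplinskaja. The hardest regime is $n\in\{17,18,19\}$, where reflective Lorentzian lattices become scarce; this is where the Leech lattice enters. Conway's description of the reflection group of $\II_{1,25}$ in terms of $\leech$ gives a fundamental chamber that restricts cleanly to many $20$-, $19$- and $18$-dimensional sublattices, and I would use this to cut out a suitable $P_n$ in each high dimension, reading off the required perpendicular facet directly from the Leech-lattice combinatorics.

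The refinements in the theorem come next. For $2\leq n\leq 6$ I need both compact and noncompact seeds: compact redoublable polyhedra can be extracted from Bugaenko–Vinberg-type constructions with short roots arranged to produce a perpendicular facet, while noncompact ones arise from the same lattice-theoretic setup over non-cocompact arithmetic groups. For $n=7$ or $8$, obtaining infinitely many \emph{nonarithmetic} examples is the subtle point: starting from two distinct arithmetic Coxeter polyhedra sharing an isometric facet, a Gromov–Piatetski-Shapiro hybrid gives a nonarithmetic polyhedron, and if this hybrid is itself redoublable then iterating doubles produces the desired infinite sequence. Here I would target standard $7$- and $8$-dimensional lattices where an explicit common facet is available.

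The main obstacle, I expect, is ensuring that doubling can be \emph{iterated} indefinitely at the top of the dimension range. Doubling once is not hard; the trouble is that the new polyhedron $P'$ may fail to have any remaining perpendicular facet. A convenient way to force indefinite iteration is to locate in $P$ two disjoint facets $F_1,F_2$ each perpendicular to all their neighbors, with $F_1\perp F_2$, so that $F_2$ survives as a doubling facet of $P'$ and its image likewise survives in $P''$, and so on. Producing such a pair inside each seed polyhedron — and in particular inside the Leech-lattice-derived seeds in dimensions $18$ and $19$ — is where the bulk of the technical work will lie.
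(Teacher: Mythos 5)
Your overall strategy is essentially the paper's: define doubling walls, call a polyhedron redoublable when it has two such walls that do not meet (so that the double again has two disjoint doubling walls and the construction iterates, with volume distinguishing the members of the family), and then exhibit a seed in every dimension from the known catalogues (Vinberg, Vinberg--Kaplinskaja, Makarov, Bugaenko for the compact cases $n\leq6$) together with faces of Conway's chamber in $H^{25}$ built from the Leech lattice at the top of the range; for $n=7,8$ the paper simply quotes Ruzmanov's nonarithmetic polyhedra, which are produced by a gluing-plus-corner-cutting procedure close in spirit to your Gromov--Piatetski-Shapiro suggestion and happen to be redoublable.

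Two points in your outline would cause trouble as written. First, you define a doubling facet as one \emph{perpendicular} to all adjacent facets. The notion the paper needs is weaker: the dihedral angles along the wall need only be even submultiples of $\pi$ (that is, $\pi/2,\pi/4,\pi/6,\dots$), since doubling then still produces integer submultiples. This weakening is not cosmetic: in the hard dimensions the doubling walls that actually exist in the Leech-lattice faces (for instance the walls of $P_{19}$ arising from $D_6\to D_7$ and $D_6\to E_7$ extensions) meet some of their neighbours at $\pi/4$ or $\pi/6$, not $\pi/2$, and finite-volume right-angled polyhedra do not even exist beyond $H^{14}$; so insisting on orthogonality is an extra hypothesis you have not verified and that is likely unattainable near $n=19$. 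Second, your iteration condition ``two disjoint facets $F_1,F_2$ \dots with $F_1\perp F_2$'' is self-contradictory, since perpendicular hyperplanes intersect; what is needed, and what the paper's redoublability lemma uses, is only that $F_1$ and $F_2$ do not meet: then $F_2$ and its reflected image are two disjoint doubling walls of the double, so the property propagates. (If instead $F_1\perp F_2$, the two reflections generate a finite group and the process stalls after finitely many copies.) With these two corrections your plan coincides with the paper's proof.
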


\begin{theorem}
\label{thm-exponentially-many-polyhedra}
For every $n\leq19$, with the possible exceptions of $n=16$ and~$17$,
the number of isometry classes of Coxeter polyhedra in $H^n$ of
volume${}\leq V$ grows at least exponentially with respect to $V$.
For $2\leq n\leq6$, these polyhedra may be taken to be either compact or
noncompact.
\end{theorem}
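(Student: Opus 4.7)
My plan is to apply a doubling construction. Call a facet $F$ of a Coxeter polyhedron $P\subset H^n$ \emph{doublable} if every other facet of $P$ meeting $F$ does so at a right angle; reflecting $P$ across the hyperplane supporting $F$ then produces a new Coxeter polyhedron $P'$ of volume $2\operatorname{vol}(P)$, since the right angles at $F$ straighten to $\pi$ and every other dihedral angle is preserved. The whole strategy is to iterate this operation, or a suitable variant, producing many non-isometric polyhedra.

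For each dimension in the allowed range I would first exhibit a seed Coxeter polyhedron $P_0\subset H^n$ with a doublable facet such that the doubled polyhedron still carries doublable facets, so that the construction can be continued indefinitely (this is presumably what the keyword \emph{redoublable} in the abstract refers to). For small $n$ the seeds can be described by explicit Coxeter diagrams and come from classical arithmetic constructions; for higher $n$, and especially for $n=19$, the seeds should come from the reflection group of the Lorentzian lattice $\II_{1,25}$, whose fundamental polyhedron is described by Conway in terms of Leech vectors and has a conveniently readable diagram. Intersecting that polyhedron with suitable rationally defined hyperplanes should furnish seeds in the intermediate dimensions as well.

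To upgrade ``infinitely many'' to ``exponentially many in volume'', naive doubling alone is not enough: each doubling multiplies the volume by~$2$, so a binary tree of doubling choices of depth $k$ yields polyhedra of volume ${\sim}2^k\operatorname{vol}(P_0)$ but only ${\sim}2^k$ of them, giving merely linear growth in $V$. The exponential growth must therefore come from a variant of the trick (the paper advertises ``variations'' on it) that adds only a bounded amount of volume per step, for example by attaching a small Coxeter building block along a doublable facet rather than reflecting the whole polyhedron. With $c\geq 2$ independent ways of attaching at each of $k$ successive steps, one gets $c^k$ polyhedra of volume $V_0+kv_0$, yielding the desired lower bound $N(V)\gtrsim c^{(V-V_0)/v_0}$.

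The main obstacles I expect are (a)~constructing a seed polyhedron with the necessary doubling structure in each target dimension, which is where the Leech lattice does most of the work, and (b)~checking that distinct attachment or doubling sequences really produce pairwise non-isometric Coxeter polyhedra rather than collapsing under hidden symmetries. For (b) I would look for a combinatorial invariant such as an induced subgraph of the Coxeter diagram from which the construction history can be recovered. The apparent exceptions $n=16,17$ should reflect the failure of step (a): no seed with the required structure is available in those dimensions.
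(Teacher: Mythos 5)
Your overall strategy is the right one and matches the paper's ``variation'' on doubling: instead of doubling the whole polyhedron (which, as you note, gives far too few examples per unit volume), one glues together bounded-volume pieces so that volume grows linearly in the number of pieces while the number of gluing patterns grows exponentially. In the paper the pieces are translates of a single finite-volume Coxeter polyhedron $Q$ having \emph{three} pairwise disjoint doubling walls: the reflections across these walls generate a free product of three groups of order two, whose Cayley graph $\Gamma$ is an infinite trivalent tree, and every finite subtree $T$ yields a Coxeter polyhedron $Q_T$ of volume equal to the number of vertices of $T$ times that of $Q$. Your count of ``$c^k$ polyhedra'' from $c$ attachment choices per step is the first genuine gap: different attachment sequences very often produce isometric polyhedra, and with only two disjoint doubling walls the glued copies form a path, so one gets just one shape per volume --- only linearly many polyhedra. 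Three disjoint walls are needed precisely so that the objects being counted are isomorphism classes of subtrees of a trivalent tree, and it is those classes that grow exponentially.

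The second, and central, gap is the non-isometry verification you defer as obstacle (b). A combinatorial invariant read off the Coxeter diagram is not how the paper resolves it (the author remarks that his original, more combinatorial proof was extremely intricate); instead the paper proves a geometric rigidity statement. One deletes small horoball neighborhoods of the cusps of $Q_T$ and shows that $T$ embeds in the truncated polyhedron $Q_T^-$ as a $(k,\ell)$-quasi-isometry with constants independent of $T$ (here ultraparallelism of the three doubling walls is used, via the Fuchsian group they generate on the common orthogonal $H^2$); a separate lemma shows that two metric trees with no valence-$2$ vertices and all edges longer than some $L=L(k,\ell)$ which are quasi-isometric are actually isomorphic as graphs. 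Restricting to subtrees of $\Gamma$ whose branch points are at mutual distance at least $L$ still leaves exponentially many isomorphism classes, so non-isomorphic such trees give non-isometric polyhedra, which completes the count. Finally, obstacle (a) is also nontrivial and dimension-dependent: in several dimensions the relevant faces of Conway's polyhedron in $H^{25}$ do not themselves have three disjoint doubling walls and one must first double a suitable face (this happens for $n=13$ and $n=9$), the compact cases $n=5,6$ require Bugaenko's polyhedron over $\Z[\sqrt2]$ and one of its walls, and the exclusions $n=16,17$ occur, as you guessed, because no finite-volume seed with three pairwise disjoint doubling walls could be found in those dimensions.
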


The essentially new results are the nonarithmetic examples, the
noncompact cases of both theorems for $n\geq9$, the compact case of
\fullref{thm-infinitely-many} for $n=6$, and the compact case of
\fullref{thm-exponentially-many-polyhedra} for $n=5$ and $6$.  Makarov
\cite{makarov} exhibited infinitely many compact Coxeter polyhedra in
$H^{n\leq5}$, and the remaining parts of the theorems are relatively
easy, using known right-angled polyhedra.  While our results suggest
that there is no hope for a complete enumeration of hyperbolic Coxeter
polyhedra, several authors have classified certain interesting classes
of polyhedra, eg, Esselmann \cite{esselmann}, Kaplinskaja
\cite{kaplinskaja} and Tumarkin
\cite{tumarkin-n+2,tumarkin-n+3-compact,tumarkin-n+3}.

The only dimension $n$ for which a finite-volume Coxeter polyhedron in
$H^n$ is known, and in which it remains unknown whether there are
infinitely many, is $n=21$, an example due to Borcherds
\cite{borcherds-autgps}.  The corresponding $n$ for compact polyhedra
are $n=7$ and~$8$, by examples of Bugaenko
\cite{bugaenko-H7,bugaenko-H8}.  Therefore our results may be close to
optimal, although we expect that the hypothesis $n\neq16$,~$17$ of
\fullref{thm-exponentially-many-polyhedra} can be removed and that
better results for nonarithmetic groups hold.  On the other hand,
there is still a considerable gap between the dimensions in which
Coxeter polyhedra are known to exist and those in which they are known
not to exist.  Namely, Vinberg \cite{vinberg-absence} proved that
there are no compact Coxeter polyhedra in $H^{n\geq30}$, and Prokhorov
\cite{prokhorov} proved the absence of finite-volume Coxeter polyhedra
in $H^{n\geq996}$.

The heart of our construction is a simple doubling trick.  We call a
wall of a Coxeter polyhedron $P$ a doubling wall if the angles it makes
with the walls it meets are all even submultiples of $\pi$.  By the
double of $P$ across one of its walls we mean the
union of $P$ and its image under reflection across the
wall.  We call a polyhedron redoublable if it is a Coxeter polyhedron
with two doubling walls that do not meet each other in $H^n$.

\begin{lemma}
\label{lem-doubling-trick}
The double of a Coxeter polyhedron $P$ across a doubling wall is a
Coxeter polyhedron.  If the doubling wall is disjoint from another
doubling wall, so that $P$ is redoublable, then the double is also
redoublable.
\end{lemma}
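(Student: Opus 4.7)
The plan is a direct wall-by-wall analysis of $DP = P \cup r_W(P)$, where $r_W$ denotes reflection across the doubling wall $W$. First I would enumerate the walls of $DP$: the wall $W$ itself becomes interior and disappears, a wall $F$ of $P$ perpendicular to $W$ is fixed by $r_W$ and remains a single wall of $DP$ crossing $W$, and every other wall $F$ of $P$ contributes two walls $F$ and $r_W(F)$ of $DP$. If such an $F$ meets $W$ at angle $\pi/(2k)$ with $k\ge 2$, then $F$ and $r_W(F)$ are joined in $DP$ along the codim-2 face $F\cap W$; if $F$ is disjoint from $W$, then $F$ and $r_W(F)$ are disjoint walls of $DP$.

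Next I would check that every dihedral angle of $DP$ is an integer submultiple of $\pi$. Along a codim-2 face not lying in $W$ the angle is inherited unchanged from $P$ (or its mirror $r_W(P)$) and is of the form $\pi/m$ as before. The only new dihedral angles occur along faces $F\cap W$ of the type above; here $F$ and $r_W(F)$ meet with interior angle $2\cdot\pi/(2k)=\pi/k$, again of the required form. This identity $2\cdot\pi/(2k)=\pi/k$ is precisely what the hypothesis of \emph{even} submultiples of $\pi$ is there to guarantee, and it is the heart of the lemma. The same inequality $\pi/k\le\pi$ also gives local convexity of $DP$ at the new edges.

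For the second statement, let $W'$ be another doubling wall of $P$ with $W'\cap W=\emptyset$ in $H^n$. Since $W'$ lies entirely on the $P$-side of $W$, the walls of $DP$ adjacent to $W'$ are exactly the walls of $P$ adjacent to $W'$, and each makes the same even-submultiple angle with $W'$ as in $P$; hence $W'$ is again a doubling wall of $DP$, and by $r_W$-symmetry so is $r_W(W')$. Finally, a common point of $W'$ and $r_W(W')$ would be fixed by $r_W$ and hence lie in $W$, contradicting $W'\cap W=\emptyset$; so $W'$ and $r_W(W')$ are disjoint in $H^n$ and $DP$ is redoublable.

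The main obstacle is the dihedral-angle check at higher-codimension configurations inside $W$, typified by a codim-3 face $F\cap F'\cap W$ where two walls $F,F'$ of $P$ both meet $W$. The link of $DP$ there is the double of a spherical Coxeter triangle across one of its sides; I would verify that the resulting spherical quadrilateral is again a Coxeter cell by applying the identity $2\cdot\pi/(2k)=\pi/k$ to the two angles on the reflected side and noting that the opposite angle is unchanged. This reduces to the same observation as at codim 2, so no new case analysis is required; it is only a matter of careful bookkeeping.
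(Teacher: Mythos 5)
Your proof is correct and follows essentially the same route as the paper: every dihedral angle of the double is either an angle of $P$ or twice an angle involving the doubling wall, where the evenness hypothesis gives $2\cdot\pi/(2k)=\pi/k$, and redoublability comes from the second doubling wall together with its mirror image. Your version simply adds detail the paper leaves implicit (the wall enumeration, local convexity, and the disjointness of $W'$ and $r_W(W')$ via the two half-spaces of $W$); the codimension-3 check in your last paragraph is not actually required by the definition of a Coxeter polyhedron.
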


To construct infinitely many compact (resp.\ finite-volume) Coxeter
polyhedra in $H^n$ it now suffices to find a single compact
(resp.\ finite-volume) redoublable polyhedron in $H^n$:  double it,
then double the double, and so on.  

Many already-known Coxeter polyhedra happen to be redoublable; in
fact, to prove \fullref{thm-infinitely-many}
we only need to produce a few examples.  We do this in
\fullref{sec-construction}, where we give a fairly uniform proof
of the existence of finite-volume redoublable polyhedra in every
dimension${}\leq19$.  We do this without having to compute the details
of their Coxeter diagrams.

We provide the diagrams in \fullref{sec-diagrams}, for
completeness and also for use in \fullref{sec-variations}, where
we discuss variations on the doubling construction and establish
\fullref{thm-exponentially-many-polyhedra}.  We also show that the
Coxeter group of a redoublable polyhedron contains subgroups of every
positive index that are themselves Coxeter groups.

For the most part we follow Vinberg \cite{vinberg} regarding notation and
terminology.  A wall of a polyhedron is a codimension one
face.  We say that two walls meet if they have nonempty intersection
in $H^n$.  If they do not meet, and their closures in $H^n\cup
S^{n-1}_\infty$ have a common ideal point, we call them parallel.  If
they do not share even an ideal point then we call them
ultraparallel.  Because the terms `vertices' and `edges' play many
roles, we refer to the vertices and edges of a Coxeter diagram as
nodes and bonds.  We join two nodes by no bond
(resp.\ a single bond or double bond) if the corresponding walls make
an angle of $\pi/2$ (resp.\ $\pi/3$ or $\pi/4$), and by a heavy
(resp.\ dashed) bond if the walls are parallel (resp.\ ultraparallel).
For other angles $\pi/n$ we would draw a single bond and mark it with the
numeral $n$.  We call a Coxeter diagram spherical if its
Coxeter group is finite, because finite Coxeter groups act naturally
on spheres.  When $X$ is a polyhedron or a Coxeter diagram we write
$W(X)$ for the associated Coxeter group, or just $W$ when the meaning
is clear.
By a set of simple roots for a polyhedron in $H^n\sset P(\R^{n,1})$,
we mean a set of vectors $r_i\in\R^{n,1}$ with positive norms and
nonpositive inner products, with the hyperplanes $r_i^\perp$ defining
the walls.

We refer to
a tip of a $D_n$ or $E_n$ diagram as an ear if it lies at distance~1
from the branch point and as a tail if it lies at maximal distance
from the branch point.  Explicitly: $D_{n>4}$ has two ears and a
tail, $E_7$ and $E_8$ each have one ear and one tail, $E_6$ has an ear
and two tails, and $D_4$ has three tails which are also ears.

I am grateful to the referee for the reference to Ruzmanov
\cite{ruzmanov}, to Vadim Bugaenko for allowing me to present
unpublished details from his thesis and to Anna Felikson for her
reference to Bugaenko \cite{bugaenko-H6} and her helpful suggestions,
including one which led to the current proof of
\fullref{thm-exponentially-many-polyhedra}.  My original proof was
extremely intricate and not very conceptual.  I used the PARI/GP
system \cite{pari} for some of the calculations.  I am grateful to the
National Science Foundation for supporting this research with grant
DMS-0245120.

\section{Construction of redoublable polyhedra}
\label{sec-construction}

We begin with the proof of \fullref{lem-doubling-trick}, and survey
some polyhedra in the literature that are redoublable.  Then
we give a systematic method for looking for redoublable polyhedra as
faces of known Coxeter polyhedra, and provide many examples.  The
construction is `soft' in the sense that we can prove our examples
exist without needing to understand very much about them.  See the
next section for the diagrams.

\begin{proof}[Proof of \fullref{lem-doubling-trick}]
We write $w$ for the doubling wall and $2P$ for the double of $P$
across $w$.  Every dihedral angle of $2P$ is either a dihedral angle
of $P$ or twice a dihedral angle of $P$ involving $w$.  The former are
integer submultiples of $\pi$ because $P$ is a Coxeter polyhedron, and
the latter are also because the dihedral angles involving $w$ have the
form $\pi/\hbox{(an even integer)}$.  Therefore $2P$ is a Coxeter
polyhedron.  For its redoublability, observe that the second doubling
wall and its reflection across $w$ are disjoint doubling
walls of $2P$.
\end{proof}

The simplest redoublable polyhedra in the literature have all dihedral
angles equal to $\pi/2$; these are called right-angled polyhedra.
Compact examples are known to exist in $H^n$ for $n\leq4$ and
finite-volume ones for $n\leq8$.  See Potyagailo and Vinberg
\cite{potyagailo-vinberg} for these examples and also for a proof that
compact (resp.\ finite-volume) examples cannot exist for $n>4$
(resp.\ $n>14$).

Vinberg \cite{vinberg} and Vinberg--Kaplinskaja
\cite{vinberg-kaplinskaja} found Coxeter groups acting on
$H^{n\leq19}$ by 
considering the Weyl chamber (we call it $P_n$) for the reflection subgroup of the
isometry group of the lattice $I_{n,1}$, ie, the integer quadratic
form 
$$
-x_0^2+x_1^2+\dots+x_n^2\;.
$$ 
By definition $P_n$ is a Coxeter polyhedron, and for $n\leq19$ it has
finite volume; its Coxeter diagram appears in \cite{vinberg} for
$n\leq17$ and in \cite{vinberg-kaplinskaja} for $n=18$ or~$19$.  It
turns out that $P_n$ is redoublable for $n=2$ (walls~2 and~3), $n=10$
(walls~10 and~12), $n=14$ (walls~14 and~17), $n=16$ (walls~16 and~20),
$n=17$ (walls~17 and~21), $n=18$ and $n=19$.  The specified walls are
disjoint doubling walls, and refer to the figures on p.~32 of
\cite{vinberg}.  Figures $1\beta$ and $1\gamma$ of
\cite{vinberg-kaplinskaja} display 3+12=15 pairwise disjoint doubling
walls of $P_{18}$, and figure $2\gamma$ displays $20$ pairwise
disjoint doubling walls of $P_{19}$.

Vinberg also found the Weyl chamber for the reflection subgroup of the
isometry group of the integer quadratic form
$$
-2x_0^2+x_1^2+\dots+x_n^2\;
$$
for $n\leq14$.
It turns out to be redoublable for $n=2$ (walls~1 and~3),
$n=3$ (walls~3 and~5), $n=9$ (walls~9 and~12, or~10 and~12), $n=10$ (walls~11
and~13), $n=11$ (walls~11 and~15), $n=13$ (walls~13 and~18, or~13
and~19, or~14 and~18, or~14 and~19) and $n=14$ (walls~15 and~20).  The
wall numbering refers to \cite[page 34]{vinberg}.

\begin{table}\small
\begin{center}
\tabcolsep=2.8pt
\begin{tabular}{l@{\kern8pt}rr@{\kern12pt}rr@{\kern12pt}rr@{\kern12pt}rr@{\kern12pt}rr@{\kern12pt}rr@{\kern12pt}rr}
&$a_0$&$b_0$&$a_1$&$b_1$&$a_2$&$b_2$&$a_3$&$b_3$&$a_4$&$b_4$&$a_5$&$b_5$&$a_6$&$b_6$\\
$r_{1}$&0&0&$-1$&0&1&0&0&0&0&0&0&0&0&0\\
$r_{2}$&0&0&0&0&$-1$&0&1&0&0&0&0&0&0&0\\
$r_{3}$&0&0&0&0&0&0&$-1$&0&1&0&0&0&0&0\\
$r_{4}$&0&0&0&0&0&0&0&0&$-1$&0&1&0&0&0\\
$r_{5}$&0&0&0&0&0&0&0&0&0&0&$-1$&0&1&0\\
$r_{6}$&0&0&0&0&0&0&0&0&0&0&0&0&$-1$&0\\
$r_{7}$&1&0&1&1&0&0&0&0&0&0&0&0&0&0\\
$r_{8}$&1&1&1&1&1&1&1&1&0&0&0&0&0&0\\
$r_{9}$&2&1&1&1&1&1&1&1&1&1&1&1&1&0\\
$r_{10}$&2&1&2&1&1&1&1&1&1&1&1&0&0&0\\
$r_{11}$&3&2&3&2&3&2&1&1&1&1&1&1&0&0\\
$r_{12}$&2&2&2&2&2&1&1&1&1&1&1&1&1&1\\
$r_{13}$&4&2&3&2&3&2&2&2&2&1&1&1&1&1\\
$r_{14}$&7&5&7&5&6&4&4&3&3&2&3&2&2&1\\
$r_{15}$&4&3&4&3&3&2&3&2&2&1&2&1&1&1\\
$r_{16}$&4&3&4&3&4&3&2&1&1&1&1&1&1&1\\
$r_{17}$&6&4&5&4&5&4&3&2&3&2&2&2&2&1\\
$r_{18}$&6&4&6&4&5&4&3&2&2&2&2&1&2&1\\
$r_{19}$&8&5&7&5&7&5&3&3&3&2&3&2&3&2\\
$r_{20}$&8&5&8&5&7&5&3&2&3&2&3&2&2&2\\
$r_{21}$&8&6&8&5&7&5&5&3&4&3&4&3&2&1\\
$r_{22}$&6&4&6&4&5&3&3&2&3&2&3&2&1&1\\
$r_{23}$&6&4&6&4&5&4&3&2&3&2&1&1&1&1\\
$r_{24}$&6&5&6&4&6&4&4&3&3&2&3&2&1&1\\
$r_{25}$&10&7&9&7&9&6&5&4&5&4&3&2&3&2\\
$r_{26}$&10&8&10&7&10&7&6&4&5&4&3&2&2&2\\
$r_{27}$&8&6&8&6&7&5&5&3&3&2&3&2&3&2\\
$r_{28}$&12&9&12&8&11&8&7&5&5&3&5&3&4&3\\
$r_{29}$&10&7&10&7&8&6&6&4&5&3&3&2&3&2\\
$r_{30}$&10&7&10&7&9&6&5&3&4&3&4&3&3&2\\
$r_{31}$&14&10&14&10&12&8&8&5&7&5&5&3&4&3\\
$r_{32}$&10&8&10&7&10&7&6&4&4&3&4&3&3&2\\
$r_{33}$&12&8&12&8&10&7&6&4&6&4&4&3&3&2\\
$r_{34}$&12&9&12&8&11&8&7&5&6&4&4&3&3&2\\
\end{tabular}
\end{center}
\medskip
\caption{Simple roots for Bugaenko's polyhedron in $H^6$.  Each root
  has coordinates $(a_0+b_0\sqrt2,\dots,a_6+b_6\sqrt2)$.}
\label{tab-bugaenko-roots}
\end{table}

\begin{table}\small
\begin{center}
\tabcolsep=2.5pt
\begin{tabular}{ccccc|ccccc|ccccc|ccccc|ccccc|ccccc|cccc}
$\bullet$&3& & & & &8& & &3& &4& &8&u& & &3& &3&3&u&4& &4& &u&3&u&u&u& &u&4\\
3&$\bullet$&3& & & & & & & &u&3&3&u& &u&u&u&u&u&u&u&u&u&u&u&u&u&u&u&u&u&u&u\\
 &3&$\bullet$&3& & & &8& & & & &4&8&u&4& &3&4& &3& & &u& &3&u&u&u&4&3&u& &u\\
 & &3&$\bullet$&3& & & & &4& & &3& & & &3&4& & & & &u& &u&u& & &u& &u& &u&u\\
 & & &3&$\bullet$&4& & &4&3&8& & &8&4& &4& & &3&u&u& &u& &3& &3& &u&3&u&u&u\\
\hline
 & & & &4&$\bullet$& & &4& & &u&u&u&u&u&u&u&u&u&u&u&u&u&u&u&u&u&u&u&u&u&u&u\\
8& & & & & &$\bullet$& &8& & & &8& & & &8& &8& &u& & &u&8&u& &u& & & &u& &u\\
 & &8& & & & &$\bullet$&8& & &8& & & & &8& &8&u&u&u& & &8& & & & &u&u& &u& \\
 & & & &4&4&8&8&$\bullet$& &8& & &8&4&u&4&u&u&u& &4&u&4&u&u&u&u&u&u&u&u&u&u\\
3& & &4&3& & & & &$\bullet$& &3& & & &u&3&4&u&u& & & & & & &u&u& &u& &u& & \\
\hline
 &u& & &8& & & &8& &$\bullet$&8&8& &u& & & &8& & & & & &8& &u&u&u& &u& & & \\
4&3& & & &u& &8& &3&8&$\bullet$& & & &u& &3& &3&3& &u&u&4&u& &3&4& & &u&4&u\\
 &3&4&3& &u&8& & & &8& &$\bullet$& & &u& &3&4&u&3&u&u& & &3&4& & &u&3&4&u& \\
8&u&8& &8&u& & &8& & & & &$\bullet$& &u& & &8&u& & &u& &8&u& & & & & & & & \\
u& &u& &4&u& & &4& &u& & & &$\bullet$&u&u&u&u&u&u&u&u&u&u&u&u&u&u&u&u&u&u&u\\
\hline
 &u&4& & &u& & &u&u& &u&u&u&u&$\bullet$&u& &4& &u&u&u&u&u&u&u&u&u&u&u&u&u&u\\
 &u& &3&4&u&8&8&4&3& & & & &u&u&$\bullet$& & &3& &4&u&4& &3&u&3&u& &3& & & \\
3&u&3&4& &u& & &u&4& &3&3& &u& & &$\bullet$& & &u&u& &u& & & & & & & & & & \\
 &u&4& & &u&8&8&u&u&8& &4&8&u&4& & &$\bullet$& &u&u&u&u&u&u&4& &u&4&u&4&u&u\\
3&u& & &3&u& &u&u&u& &3&u&u&u& &3& & &$\bullet$&u&u&u&u&u&u&u&u&u& &u&u&u&u\\
\hline
3&u&3& &u&u&u&u& & & &3&3& &u&u& &u&u&u&$\bullet$& &u& &u&u&u&u&u&u&u&u&u&u\\
u&u& & &u&u& &u&4& & & &u& &u&u&4&u&u&u& &$\bullet$&u&u&u&u&u&u&u&u&u&u&u&u\\
4&u& &u& &u& & &u& & &u&u&u&u&u&u& &u&u&u&u&$\bullet$&u&4& &u&u&u&u&u&u&u&u\\
 &u&u& &u&u&u& &4& & &u& & &u&u&4&u&u&u& &u&u&$\bullet$&u&u&u&u&u&u&u&u&u&u\\
4&u& &u& &u&8&8&u& &8&4& &8&u&u& & &u&u&u&u&4&u&$\bullet$& &u&u&4&u& &u&4&4\\
\hline
 &u&3&u&3&u&u& &u& & &u&3&u&u&u&3& &u&u&u&u& &u& &$\bullet$&u&u&u&u&u&u&u& \\
u&u&u& & &u& & &u&u&u& &4& &u&u&u& &4&u&u&u&u&u&u&u&$\bullet$& &u&u&u&u&u&u\\
3&u&u& &3&u&u& &u&u&u&3& & &u&u&3& & &u&u&u&u&u&u&u& &$\bullet$&u&u&u& &u&u\\
u&u&u&u& &u& & &u& &u&4& & &u&u&u& &u&u&u&u&u&u&4&u&u&u&$\bullet$&u& &u&u&u\\
u&u&4& &u&u& &u&u&u& & &u& &u&u& & &4& &u&u&u&u&u&u&u&u&u&$\bullet$&u&u&u&u\\
\hline
u&u&3&u&3&u& &u&u& &u& &3& &u&u&3& &u&u&u&u&u&u& &u&u&u& &u&$\bullet$&u& &u\\
 &u&u& &u&u&u& &u&u& &u&4& &u&u& & &4&u&u&u&u&u&u&u&u& &u&u&u&$\bullet$&u&u\\
u&u& &u&u&u& &u&u& & &4&u& &u&u& & &u&u&u&u&u&u&4&u&u&u&u&u& &u&$\bullet$&u\\
4&u&u&u&u&u&u& &u& & &u& & &u&u& & &u&u&u&u&u&u&4& &u&u&u&u&u&u&u&$\bullet$\\
\end{tabular}
\end{center}
\medskip
\caption{Bond-labels of the Coxeter diagram for Bugaenko's polyhedron
  in $H^6$.  A blank indicates an bond-label of $2$ (orthogonality), 
and `u' indicates
  ultraparallelism.}
\label{tab-bugaenko-labels}
\end{table}

Bugaenko \cite{bugaenko-H6} investigated the reflection group of the quadratic
form
$$
-(1+\sqrt2)x_0^2+x_1^2+\dots+x_n^2
$$ over $\Z[\sqrt2]$, and found that it has compact fundamental domain
if and only if $n\leq6$.  For $n=3$, $4$, $5$ and $6$ the polyhedra
are redoublable.  For $n\leq5$ the diagrams appear in
\cite{bugaenko-H6}.  (There are some minor typographical errors in the
node-labeling for $n=5$.)  For $n=6$, Bugaenko computed the
polyhedron but did not describe it completely.  We are grateful to him
for providing the details, which we will need in
\fullref{sec-variations}.  His set of simple roots appears in
table~\ref{tab-bugaenko-roots}, and the matrix of bond-labels of the
Coxeter diagram appears in table~\ref{tab-bugaenko-labels}.  Entries
that would be $2$'s have been left blank.  It is easy to check
redoublability, eg, by considering walls 9 and~19.  We remark that
Bugaenko also obtained redoublable polyhedra in $H^5$ and $H^6$ in his
study \cite{bugaenko-H7} of polyhedra over $\Z[(1+\sqrt5)/2]$.

Our method resembles the construction by Ruzmanov
\cite{ruzmanov} of finite-volume nonarithmetic Coxeter polyhedra in
$H^6,\dots,H^{10}$; his examples in $H^7$ and $H^8$ are redoublable.
His construction involves gluing two polyhedra to get a larger
polyhedron, and then ``cutting off corners'' by hyperplanes.  Cutting
off a corner creates a doubling wall.  In $H^7$ and $H^8$, he cuts
off two corners, leading to redoublable polyhedra.  We expect
nonarithmetic redoublable polyhedra to exist in some other dimensions,
but we have not attempted a systematic study.

Because of these examples, to prove \fullref{thm-infinitely-many}
we need only exhibit finite-volume redoublable polyhedra in $H^{12}$
and $H^{15}$.  Nevertheless, we will work in all dimensions${}\leq19$,
since our constructions are not very sensitive to dimension.
Our examples rely on the following result of Borcherds
\cite[example~5.6]{borcherds-normalizers}.

\begin{theorem}
\label{thm-faces-are-Coxeter-polyhedra}
Suppose $P$ is a Coxeter polyhedron with diagram $\D$, and $p$ is the
face corresponding to a spherical subdiagram $\sigma$ of $\D$ that has
no $A_n$ or $D_5$ component.  Then $p$ is itself a Coxeter polyhedron.
\end{theorem}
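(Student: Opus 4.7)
The plan is to realize $p$ as a polyhedron in a hyperbolic subspace of $H^n$, identify its walls by orthogonally projecting the simple roots of $P$, and then verify the dihedral angles are integer submultiples of $\pi$ by a finite case analysis tied to the connected components of $\sigma$.

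First I set up the ambient subspace. Let $\Pi=\{\alpha_i:i\in\sigma\}$ be the simple roots of $P$ corresponding to the nodes of $\sigma$. Because $\sigma$ is spherical, its Gram matrix is positive definite, so the span $U$ of $\Pi$ is positive definite and $V:=U^\perp\sset\R^{n,1}$ has signature $(n-|\sigma|,1)$. Thus $p$ lies inside the hyperbolic subspace $L:=V\cap H^n$ of dimension $n-|\sigma|$. For a simple root $\alpha$ of $P$ with $\alpha\notin\Pi$, set $\bar\alpha:=\alpha-\pi_U(\alpha)\in V$; when $\bar\alpha\neq 0$ and $\bar\alpha^\perp\cap L$ bounds a codimension-one face of $p$, this face is a wall of $p$, and every wall of $p$ arises this way.

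Next I analyze dihedral angles. Using $\cos\theta=-\langle\bar\alpha,\bar\beta\rangle/(|\bar\alpha|\,|\bar\beta|)$ for the dihedral angle $\theta$ at a codimension-two face where $\bar\alpha^\perp$ and $\bar\beta^\perp$ meet, I decompose $\Pi$ into the simple roots $\Pi_j$ of the connected components $\sigma_j$ of $\sigma$, so that $\pi_U=\sum_j\pi_{U_j}$ and
\[
\langle\bar\alpha,\bar\beta\rangle=\langle\alpha,\beta\rangle-\sum_j\langle\pi_{U_j}(\alpha),\pi_{U_j}(\beta)\rangle,\qquad |\bar\alpha|^2=|\alpha|^2-\sum_j|\pi_{U_j}(\alpha)|^2.
\]
Everything thus reduces to computing projections onto each individual spherical component $\sigma_j$ using only the roots in $\sigma\cup\{\alpha,\beta\}$ and the standard root-system data for $\sigma_j$.

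The main obstacle is the resulting finite case analysis: for each allowed irreducible spherical component type, and each possible attachment of $\alpha$ and $\beta$ to it, verify that the combined cosine has the form $\cos(\pi/m)$ for some integer $m\geq 2$. The hypothesis that $\sigma$ contain no $A_n$ or $D_5$ component is forced by this step: taking $\sigma$ to be a single node $\gamma$ of type $A_1$ joined to $\alpha$ and $\beta$ by simple bonds with $\alpha\perp\beta$, one finds $\langle\bar\alpha,\bar\beta\rangle=-\tfrac14$ and $|\bar\alpha|^2=|\bar\beta|^2=\tfrac34$, so $\cos\theta=\tfrac13$, which is not a Coxeter cosine; an analogous failure occurs for $D_5$. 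For the remaining component types the longest element of $W(\sigma_j)$ acts as $-1$ on $U_j$, and a direct though tedious computation on the finitely many attachment patterns confirms that each $\cos\theta$ is of the form $\cos(\pi/m)$, establishing that $p$ is a Coxeter polyhedron.
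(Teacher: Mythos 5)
Your proposal takes essentially the same route as the paper, which derives the theorem geometrically via the calculations for \fullref{thm-new-bond-labels}: project the simple roots of $P$ orthogonally to the span of the roots of $\sigma$, reduce the inner products component by component, and check the finitely many two-node spherical extensions ($B_k\to B_{k+2}$, $D_k\to D_{k+2}$, $B_2\to F_4$, $D_4\to E_6$, $D_6\to E_8$, $E_6\to E_8$, $I_2(5)\to H_4$) case by case; your $A_1$ computation giving $\cos\theta=1/3$ is exactly the standard reason the $A_n$ components must be excluded. Two caveats. First, your side remark that for every allowed component type the longest element of $W(\sigma_j)$ acts as $-1$ on $U_j$ is false: $E_6$, $D_{2k+1}$ with $2k+1\geq 7$, and $I_2(m)$ with $m$ odd are all permitted by the hypothesis but their longest elements are not $-\mathrm{id}$. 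Nothing in your argument actually uses this claim (the verification is the direct case computation), but as written it is an incorrect step and should be deleted. Second, it is not true that every attachment pattern yields a Coxeter cosine, and your phrasing ``verify that the combined cosine has the form $\cos(\pi/m)$'' should be corrected to ``either the walls do not meet or the cosine is $\cos(\pi/m)$'': whenever $\sigma\cup\{A,B\}$ is non-spherical, the form restricted to the span of $\bar\alpha,\bar\beta$ fails to be positive definite and the corresponding walls of $p$ are disjoint in $H^n$, so no angle condition arises --- this is precisely how the paper disposes of most cases. Relatedly, the no-$A_n$ hypothesis is also used (as the paper notes) to see that a spherical extension attaches to $\sigma$ along at most one node and only by a single bond, which is what keeps your list of attachment patterns finite.
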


We will need more precise information about the shape of $p$, so we
discuss how to obtain the Coxeter diagram of $p$ from that of $P$.
These calculations provide a geometric proof of Borcherds' theorem.  

Because the faces of $P$ are in bijection with the spherical
subdiagrams of $\D$, the walls of $p$ correspond to the nodes $A$ of
$\D$ which extend $\sigma$ to a larger but still-spherical diagram.
We call such a node a spherical extension of $\sigma$.  We say that a
node of $\D$ attaches to $\sigma$ if it is joined to some node of
$\sigma$ by an bond of any type.  If $\sigma$ is as in
\fullref{thm-faces-are-Coxeter-polyhedra} and $A$ is a spherical
extension of it, then $A$ joins to at most one node of $\sigma$, and
if it joins to a node of $\sigma$ then the bond is a single bond.
(Because $\sigma$ has no $A_n$ components, any other extension of
$\sigma$ would be non-spherical.)
If $a$ and $b$ are two walls of $p$, coming from
walls $A$ and $B$ of $P$, ie, $a=A\cap p$ and $b=B\cap p$, then
their dihedral angle $\angle ab$ will be at most $\angle AB$.  The new
dihedral angles can be worked out by the following rules.  

\begin{theorem}
\label{thm-new-bond-labels}
Under the hypotheses of \fullref{thm-faces-are-Coxeter-polyhedra}:
\begin{enumerate}
\item If neither $A$ nor $B$ attaches to $\sigma$, then $\angle
  ab=\angle AB$.
\label{item-neither-A-nor-B-attaches}
\item If just one of $A$ and $B$ attaches to $\sigma$, say to the
  component $\sigma_0$, then\label{link-to-2}
\begin{enumerate}
\item if $A\perp B$ then $a\perp b$;
\label{item-just-one-of-A-and-B-attaches-and-they-are-orthogonal}
\item if $A$ and $B$ are singly joined and adjoining $A$ and $B$
  to $\sigma_0$ yields a diagram $B_k$ (resp.\ $D_k$, $E_8$ or
  $H_4$) then $\angle ab=\pi/4$ (resp.\ $\pi/4$, $\pi/6$ or $\pi/10$);
\item otherwise, $a$ and $b$ do not meet.
\end{enumerate}
\item If $A$ and $B$ attach to different components of $\sigma$,
  then
\begin{enumerate}
\item if $A\perp B$ then $a\perp b$;
\label{item-A-and-B-orthogonal-and-attach-to-different-components}
\item otherwise, $a$ and $b$ do not meet.
\end{enumerate}
\item If $A$ and $B$ attach to the same component of $\sigma$, say
  $\sigma_0$, then\label{link-to-4}
\begin{enumerate}
\item if $A$ and $B$ are unjoined and
  $\sigma_0\cup\{A,B\}$ is a diagram $E_6$ (resp.\ $E_8$ or $F_4$) then
  $\angle ab=\pi/3$ (resp.\ $\pi/4$ or $\pi/4$);
\label{item-A-and-B-unjoined-attach-to-same-component-and-extend-it-to-spherical}
\item otherwise, $a$ and $b$ do not meet.
\end{enumerate}
\end{enumerate}
\end{theorem}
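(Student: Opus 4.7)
The plan is to use the projection formula for the dihedral angles of $p$. Let $V_\sigma=\mathrm{span}\{r_i:i\in\sigma\}$; since $\sigma$ is spherical, $V_\sigma$ is positive definite, and one has the orthogonal projection $\pi\colon\R^{n,1}\to V_\sigma^\perp$. The face $p$ lies in $V_\sigma^\perp\cap H^n$, and its walls are the hyperplanes $\pi(r_A)^\perp$ as $A$ ranges over the spherical extensions of $\sigma$; moreover $a=A\cap p$ and $b=B\cap p$ meet inside $p$ precisely when $\sigma\cup\{A,B\}$ is itself a spherical subdiagram of $\D$, and when they do the dihedral angle satisfies
$$
\cos(\pi-\angle ab)=\frac{\langle \pi(r_A),\pi(r_B)\rangle}{\|\pi(r_A)\|\,\|\pi(r_B)\|}.
$$
Writing $\pi(r_A)=r_A-u_A$ and $\pi(r_B)=r_B-u_B$ with $u_A,u_B\in V_\sigma$, the identity to exploit is
$$
\langle \pi(r_A),\pi(r_B)\rangle=\langle r_A,r_B\rangle-\langle r_A,u_B\rangle-\langle u_A,r_B\rangle+\langle u_A,u_B\rangle.
$$

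Items (1), (2a), and (3a) fall out at once. In (1), both $u_A$ and $u_B$ vanish since $A,B\perp V_\sigma$, so $\pi$ leaves $r_A,r_B$ alone. In (3), $u_A\in V_{\sigma_1}$ and $u_B\in V_{\sigma_2}$ for distinct components, and together with $V_{\sigma_1}\perp V_{\sigma_2}$, $r_A\perp V_{\sigma_2}$, $r_B\perp V_{\sigma_1}$, this kills every cross term, yielding $\langle\pi(r_A),\pi(r_B)\rangle=\langle r_A,r_B\rangle$; taking $A\perp B$ gives (3a). In (2), only $A$ attaches (to $\sigma_0$), so $u_B=0$ and $\langle u_A,r_B\rangle=0$ because $B\perp V_\sigma$; the same identity holds and gives (2a).

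For the remaining clauses there are two tasks: decide when $\sigma\cup\{A,B\}$ can be spherical (so that $a,b$ actually meet), and in those cases compute the angle. Both use the hypothesis that $\sigma$ contains no $A_n$ or $D_5$ component. A case analysis against the classification of finite Coxeter diagrams shows: in case (2), with $A$ singly joined to $\sigma_0$ and to $B$, the only spherical $\sigma_0\cup\{A,B\}$ are $B_k$, $D_k$, $E_8$, $H_4$ (realized via $\sigma_0$ of type $B_{k-2}$, $D_{k-2}$, $E_6$, $I_2(5)$ respectively); in case (4), with $A,B$ unjoined and both attached to $\sigma_0$, the list is $E_6$, $E_8$, $F_4$; and in case (3b), joining two spherical pieces $\sigma_1\cup\{A\}$ and $\sigma_2\cup\{B\}$---each of which carries an ``exotic'' feature (branch, multiple bond, or bond label $\geq 5$) inherited from $\sigma_i$, a feature forced by the no-$A_n$, no-$D_5$ hypothesis---by a single $A$--$B$ bond always produces a non-spherical diagram, since no spherical Coxeter diagram carries two such features. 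For each spherical case I would compute $\pi(r_A)$, $\pi(r_B)$ directly in the standard realization---the classical types in $\R^k$, $E_8$ in its usual model, $H_4$ via icosahedral coordinates---to read off the claimed angles $\pi/3,\pi/4,\pi/6,\pi/10$.

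The main obstacle is not the projection identity but the classification step: enumerating the spherical extensions compatible with the hypothesis on $\sigma$, ruling out all remaining configurations, and handling the exceptional values $\pi/6$ and $\pi/10$, which require concrete computation inside $E_8$ and $H_4$. The no-$A_n$, no-$D_5$ hypothesis is calibrated precisely to cut the list of spherical $\sigma\cup\{A,B\}$'s down to the small collection of shapes appearing in the statement.
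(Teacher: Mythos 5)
Your proposal is correct and follows essentially the same route as the paper: project the simple roots of $A$ and $B$ onto the orthogonal complement of the span of $\sigma$'s roots, use the inner-product identity to dispose of cases (1), (2a), (3a), justify every ``do not meet'' clause by non-sphericity of $\sigma\cup\{A,B\}$, and reduce the remaining angle values to a finite list of extensions ($B_k$, $D_k$, $B_2\to F_4$, $D_4\to E_6$, $D_6\to E_8$, $E_6\to E_8$, $I_2(5)\to H_4$) computed in standard root-system models. The only difference is bookkeeping: the paper works one sample computation ($D_6\to E_8$, ear and tail) explicitly and leaves the rest as similar, exactly as you propose to do.
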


\begin{proof}
All conclusions that $a$ and $b$ do not meet are justified by
observing that adjoining both $A$ and $B$ to $\sigma$ yields a
non-spherical diagram.  For the remaining cases we 
choose simple roots $r_1,\dots,r_\ell$  for the
nodes comprising $\sigma$.  We
write $Y$ for the
span of $r_1,\dots,r_\ell$ and $\Pi$ (resp.\ $\Pi^\perp$) for
orthogonal projection in $\R^{n,1}$ to $Y$ (resp.\ $Y^\perp$).  If $s$
and $t$ are simple roots for $P$ corresponding to $A$ and $B$, then
$\Pi^\perp(s)$ and $\Pi^\perp(t)$ are simple roots for $p$
corresponding to $a$ and $b$.  If neither $A$ nor $B$ joins to
$\sigma$ then $s$ and $t$ are their own projections to $Y^\perp$, and
$\angle ab=\angle AB$, justifying \eqref{item-neither-A-nor-B-attaches}.
More generally, the norms and inner product of $\Pi^\perp(s)$ and
$\Pi^\perp(t)$ determine  $\angle ab$. We have
$(\Pi^\perp(s))^2=s^2-\Pi(s)^2$ and similarly for $t$, and
$\Pi^\perp(s)\cdot\Pi^\perp(t)=s\cdot t-\Pi(s)\cdot\Pi(t)$, so it
suffices to find the norms and inner product of $\Pi(s)$ and $\Pi(t)$.
We may introduce whatever coordinates we like to describe the $r_i$,
and determine $\Pi(s)$ and $\Pi(t)$ in terms of these coordinates by
using their known inner products with the $r_i$.  With $\Pi(s)$ and
$\Pi(t)$ in hand, it is easy to compute $\angle
ab=\pi-\angle(\Pi^\perp(s),\Pi^\perp(t))$.

Unless $A$ and $B$ attach to the
same component of $\sigma$ we have $\Pi(s)\perp\Pi(t)$, in which case $s\perp
t$ implies $\Pi^\perp(s)\perp\Pi^\perp(t)$. This justifies
\ref{link-to-2}\ref{item-just-one-of-A-and-B-attaches-and-they-are-orthogonal} and 
\ref{link-to-4}\ref{item-A-and-B-orthogonal-and-attach-to-different-components}.

In all remaining cases, enlarging $\sigma_0$ to $\sigma_0\cup\{A,B\}$
is one of the extensions $B_k\to B_{k+2}$, $D_k\to D_{k+2}$, $B_2\to
F_4$, $D_4\to E_6$, $D_6\to E_8$, $E_6\to E_8$ and $I_2(5)\to H_4$; these must be
worked out one by one.  As an example, we treat the case where
$\sigma_0$ is a $D_6$, $A$
and $B$ are unjoined,  $A$ attaches to an ear of the $D_6$ and $B$ to
the tail.  We take the standard model of the $D_6$ root system in
$\R^6$:
$$
\includegraphics{\figdir/fig1}
$$
where $+$ and $-$ indicate $1$ and $-1$.
We  take $s$ and $t$ to have norm~2, with $s\cdot r_1=-1$ and $t\cdot
r_5=-1$, and their inner products with the other $r_i$ being~$0$.
Then $\Pi(s)$ must be the vector $\frac{1}{2}(1,1,1,1,1,1)$ 
and $\Pi(t)$ the vector $(0,0,0,0,0,1)$.  These have norms $3/2$ and
$1$, so $\Pi^\perp(s)$ and $\Pi^\perp(t)$ have norms $1/2$ and $1$.
Also, $\Pi^\perp(s)\cdot\Pi^\perp(t)=s\cdot
t-\Pi(s)\cdot\Pi(t)=0-1/2$, and we get $\angle ab=\pi/4$.
The other calculations are similar; for convenient models of the root
systems see for example \cite[Chapter 4]{splag}.  We remark that simple roots for $I_2(5)$
consist of two norm~2 vectors with inner product $-\phi$, where
$\phi=(1+\sqrt5)/2$ is the golden ratio.
\end{proof}

\begin{remarks}$\phantom{99}$
\begin{enumerate}
\item Borcherds formulated \fullref{thm-faces-are-Coxeter-polyhedra}
    using the Tits cone rather than hyperbolic space, so that it
    applies in any Coxeter group; 
    \fullref{thm-new-bond-labels} extends similarly.  
\item For hyperbolic polyhedra
    it is natural to distinguish between parallelism and
    ultraparallelism of walls of $p$ which do not meet.  This refinement may
    be obtained by extending the above rules as follows.   Suppose $a$
    and $b$ do not meet.   If adjoining
    both $A$ and $B$ to $\sigma$ yields a diagram with an
    affine component, then $a$ and $b$ are parallel; otherwise, $a$
    and $b$ are ultraparallel.
\end{enumerate}
\end{remarks}

For a less-complicated statement, we isolate the conclusions
of \fullref{thm-new-bond-labels} that we will use in our examples.
The proof consists of chasing through the various cases of the
theorem.

\begin{corollary}
\label{thm-angle-corollary}
Suppose $P$, $\Delta$, $p$ and $\sigma$ are as in
\fullref{thm-new-bond-labels}.  Suppose $w$ is a wall of $p$
corresponding to a spherical extension of $\sigma$ which
attaches to some $D_{n\geq6}$, $E_6$ or $E_7$ component of $\sigma$.
Then $w$ is a doubling wall
of $p$.  Two 
such extensions of the same component of $\sigma$ yield disjoint doubling walls,
except in the case that adjoining both of them to $\sigma$ enlarges
that component by $D_6\to E_8$.
\qed
\end{corollary}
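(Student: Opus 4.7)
The plan is to chase through the cases of Theorem~\ref{thm-new-bond-labels}, keeping track of which configurations are compatible with $\sigma_0$ being one of $D_{n\geq 6}$, $E_6$, or $E_7$.

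For the first assertion, let $A$ be the node giving $w$, and let $b$ be any wall of $p$ meeting $w$, coming from a node $B$ of $\Delta$. Since $A$ attaches to $\sigma_0$, we are in case~(2), (3), or~(4) of the theorem. Sub-cases~(2a) and~(3a) give $a\perp b$; sub-case~(2b) gives angles in $\{\pi/4,\pi/6,\pi/10\}$; and sub-case~(4a) can in principle give $\pi/3$ or $\pi/4$, via $\sigma_0\cup\{A,B\}$ being $E_6$, $E_8$, or $F_4$. Since $\sigma_0$ already has at least six nodes, the $E_6$ and $F_4$ possibilities are excluded by size, leaving only the $E_8$ case with angle $\pi/4$. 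All other sub-cases produce walls that do not meet and so impose no doubling condition. Every angle that $w$ forms with a wall it meets is thus of the form $\pi/(2k)$, so $w$ is a doubling wall.

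For the second assertion, let $A$ and $B$ be two such extensions of the same component $\sigma_0$, giving walls $a$ and $b$ of $p$; each is a doubling wall by the paragraph above, and we are in case~(4) of the theorem. Unless case~(4a) applies, $a$ and $b$ do not meet and hence are disjoint. Case~(4a) requires $\sigma_0\cup\{A,B\}$ to be $E_6$, $E_8$, or $F_4$ with $A$ and $B$ unjoined; again by size, only $E_8$ is possible, forcing $\sigma_0$ to have exactly six nodes, i.e., $\sigma_0\in\{D_6,E_6\}$.

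The key combinatorial check, which I expect to be the main obstacle, is to decide which of $D_6$ and $E_6$ admits an unjoined two-node extension to $E_8$. Comparing leg lengths from the branch point---$(1,1,3)$ for $D_6$, $(1,2,2)$ for $E_6$, and $(1,2,4)$ for $E_8$---one sees that any enlargement of $E_6$ to $E_8$ must lengthen a single length-two leg by two consecutive nodes, which are then necessarily joined to each other; whereas $D_6\to E_8$ is realized by simultaneously extending an ear to length two and the long leg to length four, the two new nodes sitting at opposite ends of the diagram and therefore unjoined. This singles out the announced $D_6\to E_8$ exception, and in every other configuration $a$ and $b$ fail to meet, completing the corollary.
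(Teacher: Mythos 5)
Your proposal is correct and follows exactly the route the paper intends: the paper's proof is just the remark that the corollary ``consists of chasing through the various cases of the theorem,'' and you have carried out that case-chase, correctly noting that the only odd submultiple $\pi/3$ is ruled out by the size of $\sigma_0$ and that among $D_6$ and $E_6$ only $D_6$ admits an extension to $E_8$ by two unjoined nodes, which yields precisely the stated exception.
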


Our examples take $P$ to be Conway's infinite-volume Coxeter
polyhedron in $H^{25}$; see \cite[Chapter 27]{splag}.  This has diagram
$\D$ with infinitely many nodes, one for each element of the Leech
lattice $\leech\sset\R^{24}$.  Two nodes are joined by no bond
(resp.\ a single bond, a heavy bond, or a dashed bond) if the
difference of the lattice vectors has norm $4$ (resp.\ $6$, $8$, or
more than~8).  To visualize $P$, regard $\leech$ as a subset of
$\R^{24}\sset\partial H^{25}$ in the upper-half-space model for
$H^{25}$.  Consider the hyperplanes which appear in this model as
hemispheres of radius $\sqrt2$ centered at lattice points.  The region
above the hyperplanes is $P$, and the angles between its walls can be
worked out by elementary geometry and seen to agree with our
description.  Because the Coxeter diagram essentially {\it is} the
Leech lattice, we write $\leech$ in place of $\D$.

The covering radius of $\leech$ is $\sqrt2$, so the
hemispheres exactly cover $\R^{24}\sset\partial H^{25}$.  This implies
that every face of dimension${}>1$ except $P$ itself has finite volume; for a formal
proof see \cite[Lemma~4.3]{borcherds-autgps}.  The isometry group of
$P$ is the infinite group $\conway_\infty$ of all isometries of $\leech$,
including translations.  The idea of studying the faces
of $P$ is due to Conway and Sloane
\cite{conway-sloane-vinberg-groups} and was refined by Borcherds
\cite{borcherds-autgps}.

\begin{example}
\label{eg-e6-and-e7}
{\sl Finite-volume redoublable polyhedra in $H^{19}$ and $H^{18}$\/}:\qua
By the calculations required to prove Theorem~24 (resp.\ Theorem~22) in
\cite[Chapter 23]{splag}, 
$\leech$ contains a single orbit of diagrams $E_6$ (resp.\ $E_7$); such
a diagram has three extensions to $E_7$ (resp.\ two extensions to
$E_8$).  (Note that \cite[Chapter 23]{splag} uses nonstandard
notation, writing $e_n$ for $E_n$, $E_n$ for $\tilde E_n$ and
similarly for $A_n$ and $D_n$.)
Therefore the faces of $P$ corresponding to the $E_6$ and
$E_7$ diagrams are redoublable.  The $E_6$ face was found by Vinberg
\cite{vinberg-most-algebraic} and interpreted as such by Borcherds
\cite{borcherds-autgps}, who also found the $E_7$ face.  These faces
are simpler than the $D_6$ and $D_7$ faces of the next example, having
only~36 and~24 walls, rather than~50 and~37.
\end{example}

\begin{example}
\label{eg-d6-through-d9}
{\sl Finite-volume redoublable polyhedra in $H^{19},\dots,H^{16}$\/}:\qua
$\leech$ contains affine diagrams $\tilde{D}_7,\dots,\tilde{D}_{10}$; for
explicit vectors see figs.~23.14, 23.24, 23.16 and~23.25 of
\cite[Chapter 23]{splag}.   
Therefore, $\leech$  contains for each $n=6,\dots,9$ a $D_n$ that
has two distinct extensions to a $D_{n+1}$.  
By the corollary,
these $D_n$ faces of $P$ are redoublable.  These examples turn out to
be the polyhedra $P_{25-n}$ of Vinberg and Vinberg--Kaplinskaja; see
\cite{borcherds-autgps}.  (The $D_4$ face is Borcherds' Coxeter
polyhedron; it is not redoublable because of the $\pi/3$ appearing in
case
\ref{link-to-4}\ref{item-A-and-B-unjoined-attach-to-same-component-and-extend-it-to-spherical}
of \fullref{thm-new-bond-labels}.)
\end{example}

For the cases $n=6$ or~$7$ there is a
special phenomenon, because the $D_n$ admits spherical extensions to $E_{n+1}$
as well as to $D_{n+1}$.  Therefore one expects a $D_6$ or $D_7$ face
of a Coxeter polyhedron to have unusually many doubling walls, and be
unusually likely to be redoublable.
This suggested looking at $D_6D_n$ and $D_7D_n$ faces of $P$,
which led to the examples below.  

\begin{example}
\label{eg-d6d4-and-d7d4}
{\sl Finite-volume redoublable polyhedra in $H^{15}$ and $H^{14}$\/}:\qua
We consider faces $D_6D_4$ and $D_7D_4$ of $P$.  By
the calculations leading to figure~23.20 of 
\cite[Chapter 23]{splag}, $\conway_\infty$ acts transitively on $D_4$'s in
$\leech$, and the elements of $\leech$ not joined to $D_4$ form the
incidence graph of the points and lines of ${\bf P}^2(\F_4)$.  It is
easy to find a $D_7$ subdiagram of this graph that has two distinct
extensions to $E_8$.  Therefore the $D_7D_4$ face is redoublable.
Discarding the tail of the $D_7$, the extensions $D_7\to E_8$ become
extensions $D_6\to E_7$ and the same argument shows that the $D_6D_4$
face is also redoublable.
\end{example}

\begin{example}
\label{eg-d6d6-and-d6d7}
{\sl Finite-volume redoublable polyhedra in $H^{13}$ and $H^{12}$\/}:\qua
 We consider faces $D_6D_6$ and $D_6D_7$ of $P$.  By the calculations
 leading to figure~23.20 of \cite[Chapter 23]{splag}, $\conway_\infty$
 acts transitively on $D_6$'s, and the elements of $\leech$ not joined
 to a $D_6$ form the graph which is the first barycentric subdivision
 of the Petersen graph.  One proceeds exactly as in the previous
 example, finding a $D_7$ subgraph having two extensions to $E_8$.
\end{example}

\begin{example}
\label{eg-d7d4-and-d7d6-through-d7d16}
{\sl Finite-volume redoublable polyhedra in $H^n$ for $n=14$ and
$n=12,\dots,2\,$\/}:\qua We seek a suitable face $D_7D_n$ of
$P$, namely one having two extensions to $E_8D_n$ and/or
$D_8D_n$; such extensions will yield doubling walls of the face,
necessarily disjoint.  We could proceed by considering each $D_n$ in
turn, looking for $D_7$'s not joined to it.  But it is easier to fix
an affine diagram $\tilde{E}_8$ and find a $D_n$ disjoined from it,
for $n=4$ and $n=6,\dots,16$.  Then the two extensions $D_7\to D_8$
and $D_7\to E_8$ inside $\tilde{E}_8$ show that the $D_7D_n$ face is
redoublable.  We don't even need to look for such an $\tilde{E}_8$
since Conway, Parker and Sloane give explicit vectors forming an
$\tilde{E}_8\tilde{D}_{16}$; see~\cite[figure 23.27]{splag}.  We have
already seen the $n=4$ and $n=6$ cases in
examples~\ref{eg-d6d4-and-d7d4} and~\ref{eg-d6d6-and-d6d7}.
\end{example}

\section{Explicit Diagrams}
\label{sec-diagrams}

In this section we give the Coxeter diagrams for the redoublable
polyhedra from examples~\ref{eg-d6d4-and-d7d4}--\ref{eg-d7d4-and-d7d6-through-d7d16} of \fullref{sec-construction}.  They are
all faces of the $D_6$ face of Conway's polyhedron $P$, so we begin by
describing the~50 spherical extensions of $D_6$ in $\leech$.  These
define the polyhedron $P_{19}$ of Vinberg and Kaplinskaja, which is
completely described in \cite{vinberg-kaplinskaja};  all we do is
introduce a notation that 
allows easier record-keeping and
makes the $S_5$ symmetry manifest.  

Conway, Parker and Sloane
\cite[pages 495--496]{splag} choose specific elements of $\leech$ forming a
$D_6$, which they call $\emptyset$, $[\hat \I]$, $[\widehat{\II}]$,
$[\widehat{\III}]$, $[C]$ and $[\infty]$.  The ears are $[\widehat{\II}]$
and $[\widehat{\III}]$ and the tail is $[\infty]$.  To name the elements
of $\leech$ extending $D_6$ to $D_6A_1$ and to $D_7$, they refer to a set
$C=\{\infty,0,1,2,3,4\}$.  They label the $10+15$ extensions to $D_6A_1$ by
the~10 duads (two-point sets) not containing~$\infty$ and the~15
synthemes (a syntheme is a partition of $C$ into three duads).  They
label the five $D_7$ extensions by the duads containing~$\infty$.  The
setwise stabilizer of $D_6$ in $\conway_\infty$ is $S_5$, realized as the
group of permutations of $C$ fixing $\infty$.  The odd elements of
$S_5$ exchange the ears of $D_6$.

They do not name the 20 extensions to $E_7$, so we
introduce symbols $ab|cde$ where $a,\dots,e$ are $0,\dots,4$ in any
order, with two such symbols considered equivalent if they differ by a
cyclic permutation of the terms after the bar, or by a simultaneous
application of a transposition after the bar and reversal of the terms
before the bar.  That is, 
$$
ab|cde=ab|ecd=ab|dec=ba|edc=ba|dce=ba|ced\;.
$$
We extend Sylvester's  duad/syntheme language by calling such an
equivalence class a dryad.  The term comes from combining `duad' and
`triad' and observing that the result is a misspelling of an existing
English word.  

Conway, Parker and Sloane give explicit elements of $\leech$
represented by their duads and synthemes.  To describe the element
of $\leech$ represented by a dryad $ab|cde$, we refer to figure~23.18
of \cite{splag}, which names the positions of the $4\times6$ MOG array,
which is used for organizing the 24 coordinates.  Begin with all coordinates
$0$, then place $2$'s in the spots marked by $c$, $d$, $e$, $I$  and
by the synthemes
\begin{equation}
\label{eq-synthemes-incident-to-dryad}
\hbox{$\infty c.ad.be$,
$\infty e.ac.bd$, and 
$\infty d.ae.bc\;$.}
\end{equation}
One must check that these instructions respect the equivalences among
the symbols $ab|cde$.
Finally, place a $2$ in whichever one of the spots $\II$ and $\III$
yields an element of $\leech$.  See \cite[Chapter 11]{splag} for how to carry out
this calculation.  $S_5$ acts on the dryads by permuting
$\{0,\dots,4\}$.  

\begin{figure}[p]
\cl{\includegraphics{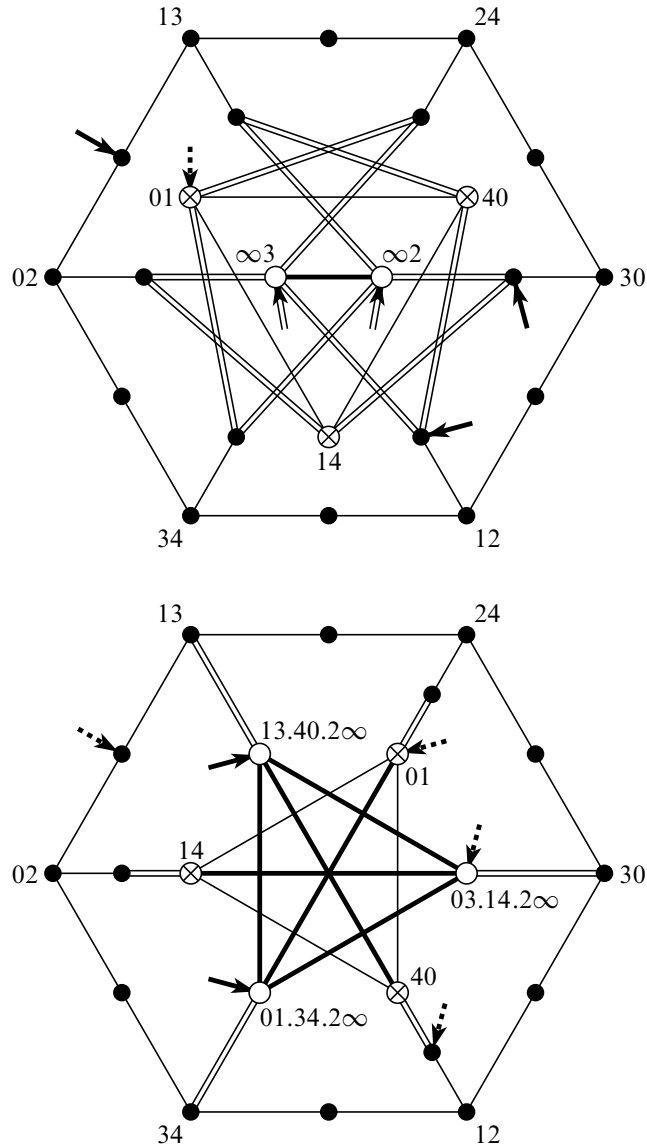}}
\caption{The $D_6D_4$ and $D_7D_4$ faces.
The arrows show the joins of
$01|234$, an $E_7D_4$ (or $E_8D_4$) extension.  The other 5
dryads and their joins to the diagram are got
by applying diagram automorphisms;  any two dryads are joined by
a dashed line.  The permutations $(410)$ and $(14)$ act on the outer hexagon by
$120^\circ$  rotation and by top-to-bottom
reflection.   The first figure has an extra
symmetry,  $(14)(23)$, acting by
left-to-right reflection.} 
\label{fig-d6d4-and-d7d4}
\end{figure}

\begin{figure}[p]
\cl{\includegraphics{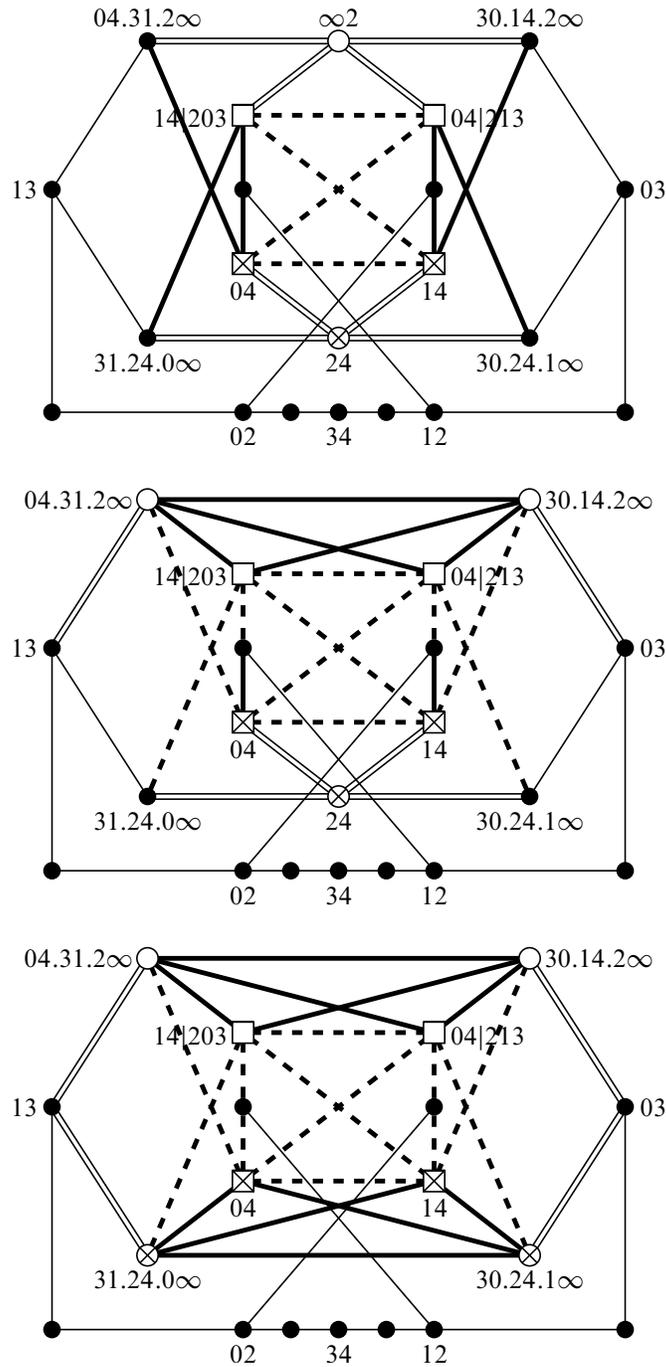}}
\caption{The $D_6D_6$, $D_7D_6$ and $D_7D_7$ faces.
Left-right reflection is $(01)$.} 
\label{fig-d6d6-d7d7-and-d7d7}
\end{figure}

\begin{figure}[p]
\cl{\includegraphics{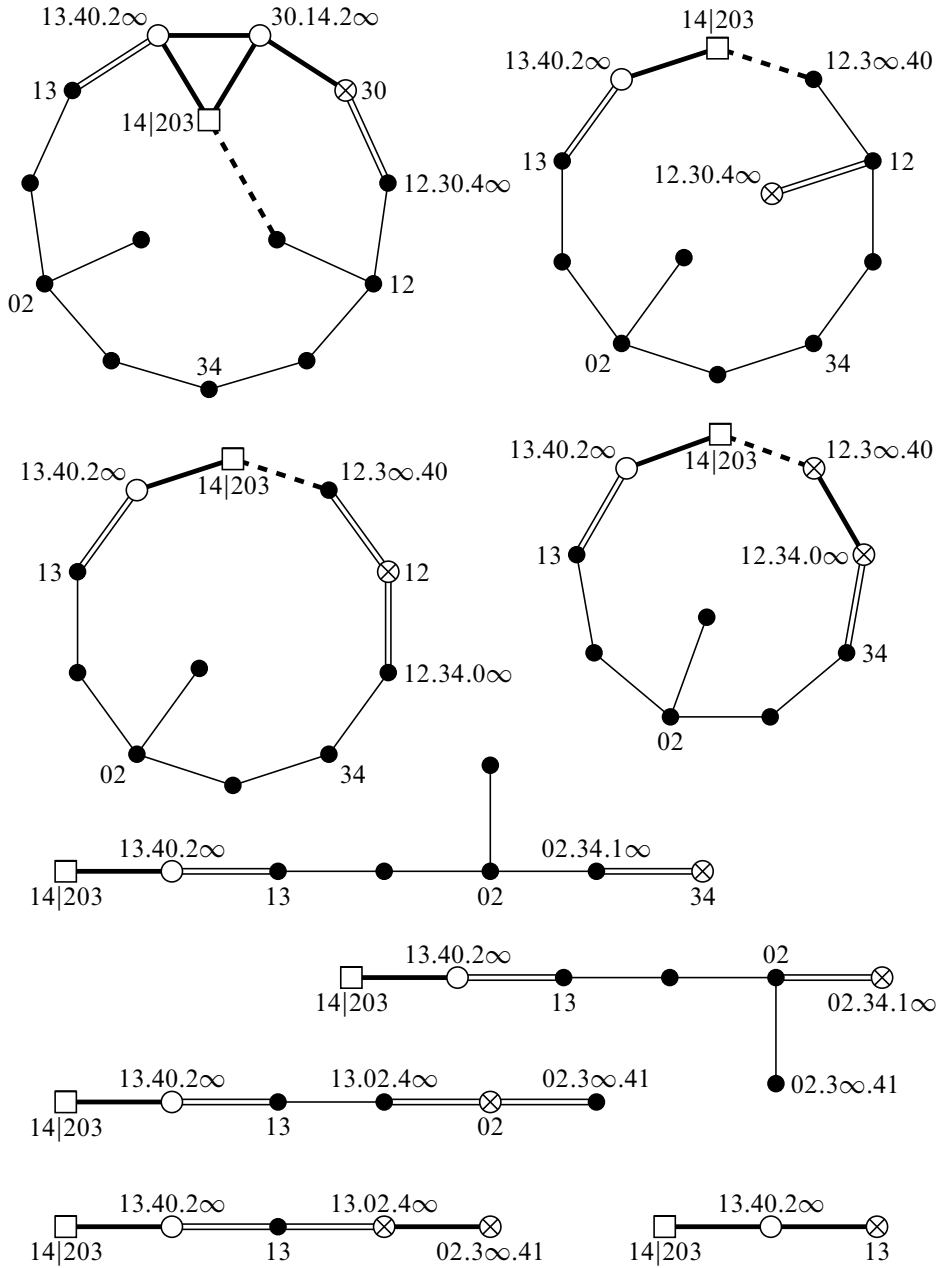}}
\caption{The $D_7D_8$ through $D_7D_{16}$ faces; for $n=12$ or~16
  there are two such faces; we have chosen the $D_7D_n$ that
  admits an extension to $D_7D_{n+1}$.}
\label{fig-d7d8-through-d7d16}
\end{figure}

With all $50$ extensions of $D_6$ given by explicit elements of
$\leech$, one can work out the joins in the diagram $\leech$; the
$S_5$ symmetry makes this fairly easy.  The only joins among duads and
synthemes are that each syntheme is joined to the three duads
comprising it.  A dryad $ab|cde$ is joined to the duads $ab$, $\infty
a$, $\infty b$ and to the synthemes of
\eqref{eq-synthemes-incident-to-dryad}.  The dryads fall into two
orbits under $A_5\sset S_5$, corresponding to which ear of $D_6$ they
join.  The dryad $01|234$ joins to $[\widehat{\III}]$ and the other
dryads join to $[\widehat{\II}]$ or $[\widehat{\III}]$ according to
whether they differ from $01|234$ by an odd or even permutation.  Two
dryads are joined just if they lie in different $A_5$ orbits and their
duads are disjoint.  That is,
\newcommand{\gapwidth}{3em}
\newcommand{\gapequals}{\hbox to\gapwidth{\hfil=\hfil}}
\newcommand{\gapcomma}{\hbox to\gapwidth{\hfil,\hfil}}
\newcommand{\gapand}{\hbox to\gapwidth{\hfil and\hfil}}
$$
ab|cde\gapequals ab|ecd\gapequals ab|dec
$$
is joined to the dryads
$$
dc|abe\gapcomma ce|abd\gapand ed|abc
$$
and no others.
A cute way to express the joins among the dryads is that they form a
double cover of the Petersen graph (the cover in which all circuits
have even length).

Derivation of the diagrams for the polyhedra of
examples~\ref{eg-d6d4-and-d7d4}--\ref{eg-d7d4-and-d7d6-through-d7d16}
is now a lengthy record-keeping exercise.
As explained above, our starting point is the $D_6$ face consisting of
$\emptyset$, $[\hat \I]$, $[\widehat{\II}]$, $[\widehat{\III}]$, $[C]$
and $[\infty]$, the tail being $[\infty]$.  We extend it to $D_6D_4$ by
taking the $D_4$ consisting of the duad $23$ and its neighboring
synthemes, and to $D_6D_6$ by adjoining $01$ and $01.24.3\infty$.  We
extend these two diagrams to $D_7D_4$ and $D_7D_6$ by adjoining
$\infty2$.  Then we successively extend $D_7D_6$ to
$D_7D_7,\dots,D_7D_{16}$ by adjoining $24$, $30.24.1\infty$, $30$,
$12.30.4\infty$, $12$, $12.34.0\infty$, $34$, $02.34.1\infty$, $02$
and finally $02.13.4\infty$.  For each of these $D_mD_n$ diagrams we
found the subgraph of $\leech$ consisting of its spherical extensions
and  applied
\fullref{thm-new-bond-labels} to obtain the Coxeter diagrams of
the corresponding faces of $P$.  The results appear in
figures~\ref{fig-d6d4-and-d7d4}--\ref{fig-d7d8-through-d7d16}.  The role
of each extension is indicated by the nodes of the graph, according to
the following scheme:
$$
\begin{tabular}{rl}
\solid
&$D_mD_n\to D_mD_nA_1$\\
\hollow
&$D_mD_n\to D_{m+1}D_n$\\
\hollowbox
&$D_mD_n\to E_{m+1}D_n$\\
\hollowcross
&$D_mD_n\to D_mD_{n+1}$\\
\boxcross
&$D_mD_n\to D_mE_{n+1}$\\
\end{tabular}$$
Nodes not named on the diagrams represent synthemes; which synthemes
they are can be determined from the arrangement of  duads.

We carried out the entire calculation by hand, and then wrote a
computer program to repeat the calculation as a check; it corrected three
minor errors, due to miscopying and the like.  We made the
comparison after typesetting, to avoid typographical errors.

The subgroups of $S_5$ acting on the various faces are described in
the captions.   We also remark that in the
$D_6D_4$ and $D_7D_4$ faces of \fullref{fig-d6d4-and-d7d4}, the odd
elements of $S_5$ induce the diagram automorphisms of $D_6$ and $D_7$,
and the permutations of~$0$, $1$ and~$4$ induce the diagram
automorphisms of $D_4$.  In the $D_6D_6$, $D_7D_6$ and $D_7D_7$
diagrams, the only element of $S_5$ acting is $(01)$, which induces
the diagram automorphisms of both $D_m$ and $D_n$.  The additional
symmetries of the $D_6D_6$ and $D_7D_7$ faces arise from elements of
$\conway_\infty$ exchanging the two $D_m$ components.  Finally, the
$D_7D_{11}$ face has a symmetry not induced by a symmetry of $P$.

The existence of the various diagram automorphisms proves that
$\leech$ has a unique orbit of $D_mD_n$ diagrams for each $(m,n)$
considered here, except for $D_7D_{12}$ and $D_7D_{16}$, for which
there are two orbits.  The $D_7D_{12}$ and $D_7D_{16}$ diagrams we
treat are those admitting extensions to $D_7D_{13}$ and $D_7D_{17}$.

\section{Variations on doubling}
\label{sec-variations}

Iterated doubling of redoublable polyhedra is not the only way to
construct infinitely many Coxeter polyhedra.  Suppose $Q$ is a Coxeter
polyhedron in $H^n$, $W=W(Q)$, $w_1,\dots,w_k$ are pairwise disjoint doubling
walls, and $W_0$ is the subgroup of $W$ generated by the reflections
$R_1,\dots,R_k$ across them.  By disjointness of the $w_i$, $W_0$ is a
$k$--fold free product of $(\Z/2)$'s, and its Cayley graph $\Gamma$ with
respect to the generators $R_i$ is a tree of valence $k$.  The
$W_0$--translates of $Q$ correspond to the vertices of $\Gamma$, with
two translates disjoint unless they correspond to adjacent vertices of
$\Gamma$, in which case they meet along a $W_0$--translate of one of
the $w_i$.

\begin{theorem}
\label{thm-trees-give-Coxeter-polyhedra}
Suppose $T$ is any subtree of $\Gamma$ and $Q_T$ is the union of the
translates of $Q$ corresponding to vertices of $T$.  Then $Q_T$ is a
Coxeter polyhedron.
\end{theorem}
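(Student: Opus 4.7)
The plan is to argue by induction on $|T|$. The base case $|T|=1$ is the hypothesis that $Q$ is Coxeter. For the inductive step, pick a leaf $v$ of $T$ with unique neighbor $v'$, write $g,g'\in W_0$ for the corresponding elements so that $g'=gR_i$ and $gQ$, $g'Q$ share the wall $w:=gw_i=g'w_i$, and set $T'=T\setminus\{v\}$. By induction $Q_{T'}$ is a Coxeter polyhedron, so it suffices to show that gluing $vQ=R_wg'Q$ onto $Q_{T'}$ across $w$ yields another Coxeter polyhedron.

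The heart of the proof is to show that locally near $w$ the polyhedron $Q_{T'}$ looks exactly like the single chamber $g'Q$, so that attaching $vQ$ has the same local effect as doubling $g'Q$ across $w$ in the sense of \fullref{lem-doubling-trick}. This rests on two ingredients: the pairwise disjointness of $w_1,\dots,w_k$, which implies that the only $W_0$--translate of $Q$ directly opposite $g'Q$ across $w$ is $gQ$ itself (absent from $Q_{T'}$ since $v\notin T'$); and the free-product structure of $W_0$, which guarantees that distinct $W_0$--chambers meet only along their common codimension-one walls, ruling out interference from other translates in $T'$ near $w$. Granted this, the calculation in the proof of \fullref{lem-doubling-trick} shows that each dihedral angle $\pi/(2m)$ at $w$ in $g'Q$ becomes a dihedral angle $\pi/m$ of $Q_T$ along the corresponding codimension-two face (or fuses two coplanar walls into one when $m=1$), and all remaining dihedral angles are inherited unchanged from $Q_{T'}$ or $vQ$.

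It remains to check that $Q_T$ is a genuine convex polyhedron. Every codimension-two dihedral angle is $\pi/m\leq\pi$, so $Q_T$ is locally convex along its boundary and hence globally convex in $H^n$. The main obstacle in the plan is the local analysis at $w$: one cannot invoke \fullref{lem-doubling-trick} directly, since $vQ$ is only one piece of the full reflection $R_wQ_{T'}$, so one must use the tree structure of $\Gamma$ together with the disjointness of the $w_j$ to rerun the doubling computation locally. The case of infinite $T$ requires no extra work, since each codimension-two face of $Q_T$ lies in the closure of at most two translates of $Q$ and is therefore determined by a finite subtree of $T$.
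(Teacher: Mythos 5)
Your proof is correct and rests on the same idea as the paper's: every dihedral angle of $Q_T$ is either a dihedral angle of $Q$ or twice an angle of $Q$ at one of the $w_i$, because distinct $W_0$--translates occurring in $T$ meet only along translates of the doubling walls, so the computation of \fullref{lem-doubling-trick} applies verbatim. The paper says exactly this in one sentence, so your leaf-removal induction is extra scaffolding rather than a different method; indeed your closing remark for infinite $T$ (each dihedral angle involves at most two translates, glued along a doubling wall) is by itself the paper's entire argument.
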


\begin{proof}
As in \fullref{lem-doubling-trick}, every dihedral angle of $Q_T$ is either
a dihedral angle of $Q$ or twice a dihedral angle of $Q$ that involves
one of the $w_i$.
\end{proof}

\begin{corollary}
\label{thm-Coxeter-subgroups-of-arbitrary-index}
Suppose $Q$ is redoublable and $I$ is any positive integer.  Then $W$
has a subgroup of index $I$ which is generated by reflections.
\end{corollary}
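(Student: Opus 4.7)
The plan is to apply Theorem~\ref{thm-trees-give-Coxeter-polyhedra} with a subtree of size exactly $I$ and identify the Coxeter group of the resulting polyhedron with an index-$I$ reflection subgroup of $W$. Since $Q$ is redoublable, I fix two disjoint doubling walls $w_1,w_2$, so that $W_0=\langle R_1,R_2\rangle\isomorphism(\Z/2)*(\Z/2)$ and the Cayley graph $\Gamma$ is a bi-infinite path. For the given positive integer $I$, I would let $T\sset\Gamma$ be any subpath with exactly $I$ vertices; the case $I=1$ is trivial (take $H=W$), and for $I\geq2$ such a subpath obviously exists. Then $Q_T$ is the union of $I$ distinct $W$-translates of $Q$, and by Theorem~\ref{thm-trees-give-Coxeter-polyhedra} it is a Coxeter polyhedron in $H^n$.

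Next I would let $H\leq W$ be the subgroup generated by the reflections in the walls of $Q_T$. Every wall of $Q_T$ is a wall of one of the $W$-translates of $Q$ making it up---either an ``interior'' wall inherited from a non-doubling face of some translate, or an ``exterior'' doubling wall of a leaf translate---so each such reflection lies in $W$, and $H$ is a reflection subgroup of $W$ as required by the statement. The crucial point is then that $Q_T$ is a fundamental domain for the action of $H$ on $H^n$. This is exactly what Poincar\'e's polyhedron theorem delivers for the Coxeter polyhedron $Q_T$: because its dihedral angles are integer submultiples of $\pi$, the group generated by its wall-reflections is discrete and has $Q_T$ as fundamental domain, and this discrete group is precisely $H$.

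With $Q_T$ a fundamental domain for $H$, the index $[W:H]$ equals the number of $W$-translates of $Q$ contained in $Q_T$, namely $I$: the $I$ translates inside $Q_T$ represent distinct $H$-orbits on the set of $W$-translates of $Q$ (otherwise some nontrivial element of $H$ would identify points in the interior of $Q_T$), and every $W$-translate of $Q$ lies in some $H$-translate of $Q_T$ and so is $H$-equivalent to one of them. Since $W$ acts simply transitively on its translates of $Q$, this yields $[W:H]=I$. The one step requiring genuine care is verifying that $Q_T$ really serves as a fundamental domain for $H$ (rather than being merely contained in one); that is what forces the hypotheses of Theorem~\ref{thm-trees-give-Coxeter-polyhedra} to do work, and is handed to us for free by Poincar\'e's theorem once the Coxeter-polyhedron conclusion is in place.
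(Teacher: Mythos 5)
Your proposal is correct and follows essentially the same route as the paper: take an $I$-vertex subtree $T$ of $\Gamma$ (possible since redoublability gives $k\geq2$, so $\Gamma$ is infinite), apply \fullref{thm-trees-give-Coxeter-polyhedra} to see $Q_T$ is a Coxeter polyhedron, and conclude that its reflection group is an index-$I$ reflection subgroup of $W$. The only difference is that you spell out the index computation (via Poincar\'e's theorem and counting the $I$ chambers of $Q_T$ as $H$-orbit representatives), which the paper leaves implicit, and this filling-in is accurate.
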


\begin{proof}
The redoublability hypothesis says we may take $k\geq2$, so $\Gamma$ is
infinite.  Choose any subtree with $I$ vertices and apply the
theorem. 
\end{proof}

\begin{theorem}
\label{thm-exponentially-many-subgroups}
Suppose $Q$ has finite volume and has three or more pairwise disjoint
doubling walls.  Let $N(I)$ be the number of
subgroups of $W$ of index $I$ that are generated by reflections, up to
conjugacy by isometries of $H^n$.  Then
$N(I)$ is bounded below by an exponential in $I$.
\end{theorem}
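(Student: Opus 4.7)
The plan is to apply \fullref{thm-trees-give-Coxeter-polyhedra} to an exponentially large family of subtrees of $\Gamma$, and then bound the multiplicity with which each $\mathrm{Isom}(H^n)$-conjugacy class of reflection subgroups arises.  Since $Q$ has $k\geq 3$ pairwise disjoint doubling walls, $\Gamma$ is a tree of valence at least~$3$.  The simplest large family is the set of paths in $\Gamma$ starting at the identity vertex $v_0$: a path of length $I-1$ is encoded by a reduced word $(i_1,\dots,i_{I-1})$ with $i_j\in\{1,\dots,k\}$ and $i_{j+1}\neq i_j$, giving exactly $k(k-1)^{I-2}$ paths.  By \fullref{thm-trees-give-Coxeter-polyhedra}, each such path $T$ produces a finite-volume Coxeter polyhedron $Q_T$ tiled by $I$ copies of $Q$, and hence a reflection subgroup $W_T\leq W$ of index $I$.

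Next I would bound the number of paths whose subgroups lie in a single conjugacy class.  Suppose $g\,W_{T_1}g^{-1}=W_{T_2}$ for some $g\in\mathrm{Isom}(H^n)$.  Since $Q_{T_i}$ is a fundamental domain for $W_{T_i}$, we may adjust $g$ by an element of $W_{T_2}$ so that $g(Q_{T_1})=Q_{T_2}$.  The central claim is that the tiling of $Q_T$ by its $I$ cells is intrinsic to $Q_T$ as a Coxeter polyhedron, so $g$ must send cells to cells.  Granting this, $g$ is determined by the image of the root cell $v_0Q\subset Q_{T_1}$: at most $I$ choices of target cell in $Q_{T_2}$, and at most $|\mathrm{Sym}(Q,\{w_1,\dots,w_k\})|$ choices of isometry of $Q$ onto that target, where $\mathrm{Sym}(Q,\{w_1,\dots,w_k\})$ denotes the (finite) group of self-isometries of $Q$ preserving the set of doubling walls.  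Consequently each conjugacy class comes from at most $I\cdot|\mathrm{Sym}(Q,\{w_1,\dots,w_k\})|$ of the above paths, and
\[
N(I)\;\geq\;\frac{k(k-1)^{I-2}}{I\cdot|\mathrm{Sym}(Q,\{w_1,\dots,w_k\})|}\,,
\]
which grows exponentially in~$I$.

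The main obstacle is verifying the canonicity of the tiling.  One would like to characterize the internal walls intrinsically—for example, as those hyperplanes $H\subset H^n$ meeting the interior of $Q_T$ for which, locally near $H$, $Q_T$ looks like the double across $H$ of a Coxeter polyhedron isometric to $Q$.  Rigidity of hyperbolic geometry together with the finiteness of $\mathrm{Sym}(Q)$ should force any decomposition of $Q_T$ into $I$ Coxeter subpolyhedra isometric to $Q$ to agree with the intended one, up to the bounded ambiguity already absorbed into the estimate above.  Once this canonicity is pinned down, the remainder is the combinatorial bookkeeping sketched in the second paragraph.
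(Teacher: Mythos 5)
Your overall strategy---produce exponentially many index-$I$ reflection subgroups $W_T$ from subtrees $T$ of $\Gamma$ via \fullref{thm-trees-give-Coxeter-polyhedra}, then bound how many $T$ can land in one $\mathrm{Isom}(H^n)$-conjugacy class---is sound in outline, and your reduction of the multiplicity bound to the ``canonicity of the tiling'' is correctly identified as the crux. But that canonicity claim is a genuine gap, not a technicality: you need that every isometry $g$ with $g(Q_{T_1})=Q_{T_2}$ carries the $W_0$-cells of $Q_{T_1}$ to those of $Q_{T_2}$, equivalently that the decomposition of $Q_T$ into copies of $Q$ is intrinsic to $Q_T$ as a metric object. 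Nothing in the hypotheses rules out hidden decompositions: $Q$ itself may be a double of a smaller Coxeter polyhedron, or $Q_T$ may admit a second decomposition into $I$ isometric copies of $Q$ glued along different hyperplanes, and your proposed local characterization (``near $H$, $Q_T$ looks like a double across $H$'') does not exclude this, since looking locally like a mirror says nothing about the two global sides being copies of $Q$, nor does it show an isometry must match up such hyperplanes. ``Rigidity of hyperbolic geometry plus finiteness of $\mathrm{Sym}(Q)$'' is not an argument here; proving such a uniqueness-of-decomposition statement is essentially the whole difficulty, and it is telling that your family consists of paths, which are exactly the polyhedra hardest to tell apart: they all have the same volume, the same cell count, and coarsely the same shape (a long tube), so distinguishing them requires precisely the fine rigidity you have not established.

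The paper's proof avoids this issue entirely by working coarsely rather than cell-by-cell. It first doubles $Q$ so that three doubling walls are pairwise \emph{ultraparallel}; then $W_0$ is quasi-isometrically embedded (it preserves an $H^2$ meeting the three walls orthogonally), and after deleting small horoball neighborhoods of the cusps, the inclusion $T\hookrightarrow Q_T^-$ is a $(k,\ell)$-quasi-isometry with constants independent of $T$. An isometry $Q_{T_1}\cong Q_{T_2}$ therefore yields a uniform quasi-isometry $T_1\to T_2$, and \fullref{lem-quasi-isometry-implies-isomorphism} (quasi-isometric metric trees with no valence-$2$ vertices and sufficiently long edges are graph-isomorphic) converts this into a combinatorial isomorphism. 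The count then comes not from paths but from subtrees whose branch points are at distance at least $L$, realizing exponentially many non-isomorphic abstract trivalent trees with at most $\lfloor (I-1)/L\rfloor$ edges; non-isomorphic trees give non-isometric $Q_T$, hence non-conjugate subgroups. So the paper trades your (unproven) exact rigidity of the tiling for a coarse invariant---the branch structure of the tree---that is provably preserved by isometries, at the modest cost of the preliminary doubling, the cusp-truncation bookkeeping, and a smaller (but still exponential) family of subtrees. To salvage your version you would have to prove the intrinsic-tiling statement, which the paper never needs and which does not follow from the tools it develops.
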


\begin{proof}[Proof of \fullref{thm-exponentially-many-polyhedra}, given \fullref{thm-exponentially-many-subgroups}]
For $n=1$ there is a continuous family of compact Coxeter polyhedra,
and for $n=2$ there are continuous families both of compact and
noncompact Coxeter polyhedra of finite volume.
We will exhibit a noncompact (resp.\ compact) finite-volume
Coxeter polyhedron $Q$ in $H^n$  for $n=3,\dots,15$, $18$ and~$19$
(resp.\ $n=3,\dots,6$), with
three pairwise disjoint doubling walls.  Then we just apply
\fullref{thm-exponentially-many-subgroups}.

We treat the noncompact case first.
For $n=19$, $18$, $15$ or~$14$ we take $Q$ to the $D_6$, $D_7$,
$D_6D_4$ or $D_7D_4$ face of Conway's polyhedron $P$, the doubling
walls being any three dryads.  See \fullref{fig-d6d4-and-d7d4} for the diagrams for
the last two of these $Q$.  We will come back to $n=13$ in a moment.
For $n=12$, $11$ or~$10$ we take $Q$ to be the $D_7D_6$, $D_7D_7$ or
$D_7D_8$ face of $P$, the doubling walls being (for example)
$04.31.2\infty$, $30.14.2\infty$ and $14|203$.  See figures~\ref{fig-d6d6-d7d7-and-d7d7}
and~\ref{fig-d7d8-through-d7d16}.  Returning to $n=13$, observe in \fullref{fig-d6d6-d7d7-and-d7d7} that the
$D_6D_6$ face of $P$ (call it $F$) does not have three disjoint
doubling walls.  Nevertheless, we can take $Q$ to be the double of $F$
across its doubling wall $14|203$.  Then $04$, $04|213$ and
$\overline{04|213}$ give three disjoint doubling walls of $Q$, where
the overline indicates the image of $04|213$ under the reflection used
for doubling $F$.

For $n=9$ we run into the problem that the $D_7D_9$ face
(\fullref{fig-d7d8-through-d7d16}) does not have three disjoint doubling walls, and the
doubling trick we used for $n=13$ doesn't help.  But there is a
$D_6D_6D_4$ face of $P$, call it $F$, which can be doubled to build a
suitable $Q$.  We  take $F$ to be the $D_6D_6D_4$ face of $P$ obtained
from the $D_6D_6$ face of \fullref{sec-diagrams} by taking the
$D_4$ diagram to consist of the duad $13$ and its neighboring
synthemes.  The 
Coxeter diagram for $F$ appears in
\fullref{fig-d6d6d4-and-its-double}; we found it by using
\fullref{thm-new-bond-labels}.  
We use the notation of \fullref{sec-diagrams}, and
the node \hollowdot\
indicates the unique extension $D_6D_6D_4\to D_6D_6D_5$ in $\leech$.
We take $Q$ to be the double of $F$ across its doubling wall $04|213$;
its diagram also appears in
\fullref{fig-d6d6d4-and-its-double}.  For the doubling walls of $Q$
we
take $14$, $02.3\infty.14$ and $\overline{02.3\infty.14}$.
The overline has the same meaning as before.

\begin{figure}[ht!]
\cl{\includegraphics{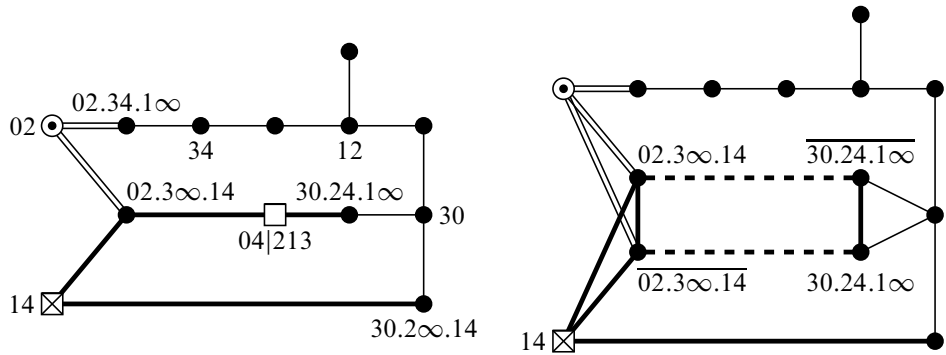}}
\caption{A $D_6D_6D_4$ face of Conway's polyhedron $P$, and its double
  across its wall $04|213$.}
\label{fig-d6d6d4-and-its-double}
\end{figure}

For $n=3,\dots,8$ we use the $n$--dimensional right-angled polyhedron
from \cite{potyagailo-vinberg}.  For $n=6$, $7$ and~$8$ it has three disjoint doubling
walls, so we can use it for $Q$.  For $n=3$, $4$ and~$5$ it does not,
but after a few random doublings one finds a right-angled polyhedron
with three disjoint doubling walls, which we can take for $Q$.

Now we construct our compact polyhedra.  For $n=3$ (resp.~$4$) we take
$Q$ to be the right-angled dodecahedron (resp.\ the right-angled
120--cell).  For $n=6$ we take $Q$ to be Bugaenko's polyhedron,
described in detail in \fullref{sec-construction}.  Writing $Q_i$
($i=1,\dots,34$) for the walls of $Q$, in the order given, $Q_9$,
$Q_{19}$ and $Q_{25}$ are pairwise disjoint doubling walls.  For $n=5$
we take the wall $Q_7$.  It is easy to see that any doubling wall of a
Coxeter polyhedron is itself a Coxeter polyhedron, and it follows that
$Q_7$ is a Coxeter polyhedron.  Writing $Q_{i,j}$ for $Q_i\cap Q_j$,
one can check that $Q_7$ has $27$ walls, of which $Q_{7,9}$,
$Q_{7,19}$ and $Q_{7,25}$ are pairwise disjoint doubling walls; indeed
each is orthogonal to every wall of $Q_7$ that it meets.  To see this,
suppose $j=9$, $19$ or~$25$ and that $k\neq j$ is such that
$Q_{7,j}\cap Q_{7,k}\neq\emptyset$; we claim that $Q_{7,j}\perp
Q_{7,k}$.  Since $Q_{7,j}\cap Q_{7,k}\neq\emptyset$, the subdiagram of
$Q$'s Coxeter diagram spanned by the $7$th, $j$th and $k$th nodes is
spherical.  Because the $7$th and $j$th nodes are joined by a bond
marked $8$, the $k$th must be disjoined from both of them, so
$Q_k\perp Q_7$ and $Q_k\perp Q_j$.  It follows from elementary
geometrical considerations that $Q_{7,j}\perp Q_{7,k}$.  (We also see
that the three doubling walls are disjoint.)
\end{proof}

Finding finite-volume Coxeter polyhedra in $H^{16}$ and $H^{17}$ with
three disjoint doubling walls would allow us to remove the
$n\neq16,17$ hypothesis from
\fullref{thm-exponentially-many-polyhedra}.  We tried various
constructions but nothing worked.

For the proof of \fullref{thm-exponentially-many-subgroups} we need the concept of a
quasi-isometry.  If $X$ and $Y$ are metric spaces and $f\co X\to Y$ is a
function, not necessarily continuous, then we call $f$ a
$(k,\ell)$-quasi-isometric embedding if for all $x,y\in X$ we have
$$
\textstyle
\frac{1}{k}d(x,y)-\ell
\leq
d\bigl(f(x),f(y)\bigr)
\leq
k\,d(x,y)+\ell\;.
$$ Here we take $k\geq1$ and $\ell\geq0$.  We call $f$ a
$(k,\ell)$-quasi-isometry if in addition every element of $Y$ lies at
distance${}\leq\ell$ of some point of $f(X)$.  Under this condition,
we may find a sort of inverse for $f$ by defining $g(y)$ to be any
point of $X$ with $f(x)$ within $\ell$ of $y\in Y$.  One can check
that $g$ is a $(k,3k\ell)$-quasi-isometry.  Finally, the composition
of a $(k,\ell)$-quasi-isometry followed by a
$(k',\ell')$-quasi-isometry is a
$(kk',k'\ell+2\ell')$-quasi-isometry. 

\begin{lemma}
\label{lem-quasi-isometry-implies-isomorphism}
For every $k\geq1$ and $\ell\geq0$ there exists $L>0$ such that if $T$
and $T'$ are trees with no vertices of valence~$2$, metrized such that
each edge has length${}\geq L$, and there is a
$(k,\ell)$-quasi-isometry $f\co T\to T'$, then $T$ and $T'$ are isomorphic
as combinatorial graphs.
\end{lemma}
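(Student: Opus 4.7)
The plan is to recover the combinatorial type of each tree from its large-scale metric geometry and to absorb the errors inherent in any quasi-isometry using the long-edge hypothesis. Since trees are $0$-hyperbolic, the Morse lemma supplies a constant $C_1=C_1(k,\ell)$ such that every $(k,\ell)$-quasi-geodesic stays within Hausdorff distance $C_1$ of the geodesic with the same endpoints. I also fix a $(k,\ell)$-quasi-isometric inverse $g\co T'\to T$, with constants depending only on $(k,\ell)$, so that $g\circ f$ and $f\circ g$ each move every point by at most some $C_2=C_2(k,\ell)$. All estimates below will hold once $L$ is chosen sufficiently large relative to $C_1$, $C_2$, $k$ and $\ell$.

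For each vertex $v$ of $T$ of valence $\geq 3$, pick three distinct adjacent vertices $w_1,w_2,w_3$ and define $\phi(v)$ to be the median of $\{f(w_1),f(w_2),f(w_3)\}$ in $T'$. Since $v$ lies on each geodesic $[w_i,w_j]$ in $T$, the Morse lemma places $f(v)$ within $C_1$ of each $[f(w_i),f(w_j)]$, and a standard tree argument then puts $f(v)$ within $C_1$ of the common intersection point $\phi(v)$. The distance $d(\phi(v),f(w_i))\geq L/k-\ell-C_1$ is positive for $L$ large, so the three segments $[\phi(v),f(w_i)]$ emanate from $\phi(v)$ in three distinct directions and $\phi(v)$ is a branch point of $T'$. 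Different choices of the $w_i$'s give medians all within $C_1$ of $f(v)$, hence within $2C_1$ of one another, and therefore they coincide because distinct vertices of $T'$ are $L$-separated; so $\phi(v)$ is well-defined. The analogous construction for $g$ yields a map $\psi$ in the reverse direction, and $\psi\circ\phi$ is the identity, because $\psi(\phi(v))$ lies within bounded distance of $g(f(v))\approx v$, while branch points of $T$ are themselves $L$-separated.

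Well-definedness makes valence preservation easy: replacing $w_3$ by any fourth neighbor $w_4$ leaves $\phi(v)$ unchanged, so $\phi(v)$ still lies on $[f(w_1),f(w_4)]$, and the directions from $\phi(v)$ to $f(w_1)$ and to $f(w_4)$ are distinct; iterating, all $n$ directions from $\phi(v)$ to $f(w_1),\dots,f(w_n)$ are distinct, and $\phi(v)$ has valence $\geq n$. The symmetric argument via $\psi$ forces equality. I extend $\phi$ to the leaves by working separately at each branch point: a leaf of $T$ is adjacent to a branch point $v$ (the degenerate cases of an isolated vertex or a single edge are handled by hand), and I match the valence-$1$ neighbors of $v$ bijectively to those of $\phi(v)$ by counting (total valence minus number of branch-point neighbors, both preserved by $\phi$). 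Adjacency is preserved: if $v,v'$ are adjacent branch points of $T$ but $\phi(v),\phi(v')$ are not, some vertex $v''$ of $T'$ lies on $[\phi(v),\phi(v')]$, and then $\psi(v'')$ would lie on $[v,v']$ in $T$ (up to error absorbed by the $L$-separation), contradicting adjacency since $\psi\circ\phi$ is the identity.

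The main obstacle is the valence-$1$ case: the median construction naturally exhibits only branch points, so extending $\phi$ to leaves and proving the resulting combinatorial map is bijective on \emph{all} vertices requires a separate, more local analysis near each branch point. Everything else is quantitative bookkeeping with the Morse lemma and the $L$-separation of vertices in a tree with long edges.
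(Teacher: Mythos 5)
Your argument is correct and proves the lemma, but it implements the two key steps differently from the paper. The paper's sketch locates the image branch point directly: for a branch point $B$ of $T$ it marks points $x_i$ at distance $a=3k\ell$ along the edges emanating from $B$, argues that no segment $[f(B),f(x_j)]$ can contain another of the $f(x_i)$, and concludes that $f(B)$ lies within $ka+\ell$ of a unique branch point of $T'$; it then recovers the combinatorics by showing that each edge of $T$, away from its ends, maps into exactly one edge of $T'$, giving a bijection on edges that preserves incidence with branch points. You instead invoke the Morse lemma for ($0$-hyperbolic) trees and define the correspondence as the median of the images of three neighbours, and you recover the combinatorics with no edge map at all, from adjacency of branch points, preservation of valence, and a leaf count at each branch point. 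Both routes rest on the same skeleton (branch points map near branch points, the quasi-inverse makes the correspondence bijective, and the $L$-separation of vertices absorbs all constants), and both are at the same sketch level of rigor; your median construction is arguably easier to make quantitative, since the stability constant for quasi-geodesics in trees is explicit, while the paper's edge map handles edges to leaves and edges between branch points uniformly, which is exactly the case you must treat separately. Two small points to tighten: in the adjacency step you need $\psi$ injective, i.e.\ also $\phi\circ\psi=\mathrm{id}$ (by the symmetric argument), in order to rule out $\psi(v'')\in\{v,v'\}$; and the phrase ``$\psi(v'')$ would lie on $[v,v']$ up to error'' is best completed by observing that any vertex of $T$ other than $v$ and $v'$ is at distance at least $L$ from the edge $[v,v']$, which exceeds the accumulated constants once $L$ is large.
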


\begin{proof}[Sketch of proof]
We give the ideas, which the reader can follow to supply explicit
estimates if desired.  One takes $L$ to be much larger than any of the
constants appearing in the argument, all of which involve only $k$ and
$\ell$.  Suppose $T$, $T'$ and $f$ are as in the statement of the
lemma.  The key point is that with $a=3k\ell$ and $L=2(ka+\ell)$,
every branch point $B$ of $T$ maps to within $ka+\ell$ of exactly one
branch point $B'$ of $T'$.  To see this one considers the points $x_i$
($i$ in some index set) on the edges emanating from $B$, at distance
$a$ from $B$.  One argues that no $x_i$ can map into the segment
$[f(B),f(x_j)]$ from $f(B)$ to $f(x_j)$, for $j\neq i$.  Therefore none
of the segments $[f(B),f(x_i)]$ contains any other, and this can only
happen if $f(B)$ lies at distance${}<ka+\ell$ of some branch point of $T'$.
Since $T'$ has edges more than twice as long as this, $f(B)$ lies
within $ka+\ell$ of exactly one branch point of $T'$.  This gives a
map 
$$
F\co \{\hbox{branch points of $T$}\}
\to
\{\hbox{branch points of $T'$}\}\;.
$$
Enlarging $L$, we may suppose $F$ is injective.  Applying the same
argument to the ``inverse'' quasi-isometry $g\co T'\to T$, one shows
(after enlarging $L$ again) that $F$ is surjective.  Enlarging $L$
again, one can choose $b>0$ such that each edge of $T$, minus the
length $b$ segments at its ends, maps into exactly one edge of $T'$.
This gives a map from edges of $T$ to edges of $T'$, which we also
denote by $F$.  Enlarging $L$ as necessary, one proves that $F$ is
injective and surjective on edges and preserves the incidence relation
between edges and branch points of $T$.  This implies that $F$ is a
graph isomorphism.
\end{proof}

\begin{proof}[Proof of \fullref{thm-exponentially-many-subgroups}]
After doubling $Q$ a few times, we may assume that $Q$ has three
doubling walls which are pairwise ultraparallel.  We choose a basepoint
$q$ in the interior of $Q$.  Let $V>0$ be small enough that the volume
$V$ closed horoball neighborhoods around distinct cusps of $Q$ are
disjoint.  By shrinking $V$ we may suppose that the perpendiculars
from $q$ to the three doubling walls miss these horoball
neighborhoods.  For any finite subtree $T$ of $\Gamma$ let $Q_T^-$ be the
subset of $Q_T$ obtained by deleting the volume $V$ closed horoball
neighborhoods of the cusps of $Q_T$.  (All proper subtrees of $\Gamma$
occurring in this proof are finite; we will omit explicit mention of
this.)  
By joining translates of $q$ by geodesics when they lie in neighboring
$W_0$--translates of $Q$, we may regard $\Gamma$ as embedded in $H^n$
(denote the embedding by $i$), and in fact $T$ is embedded in $Q_T^-$.

We claim that there exist $k\geq1$ and $\ell\geq0$ such that for all
$T$, $i\co T\to
Q_T^-$ is a $(k,\ell)$-quasi-isometry, where $\Gamma$ is
equipped with the metric in which edges have unit length, and
$Q_T^-$ is equipped with its natural path metric.  To see this we
begin by observing that $i\co \Gamma\to H^n$ is a $(k,\ell)$-quasi-isometric
embedding for some $(k,\ell)$; this is a consequence of the fact that
the doubling walls are ultraparallel.  In fact, $W_0$ is a Fuchsian
group, preserving the unique $H^2$ orthogonal to the three doubling
walls, with the generating reflections acting on it by reflections
across three pairwise ultraparallel lines.  We enlarge $k$ if
necessary so that every edge of $i(\Gamma)$ has length${}\leq k$.  Now,
for any $T$ and $x,y\in T$, we have
\begin{displaymath}
\begin{split}
\frac{1}{k}d_T(x,y)-\ell
\leq
d_{H^n}\bigl(i(x),i(y)\bigr)
&
\leq
d_{Q_T^-}\bigl(i(x),i(y)\bigr)\\
&
\leq
d_{i(T)}\bigl(i(x),i(y)\bigr)
\leq
kd_T(x,y)\;,\\
\end{split}
\end{displaymath}
and it follows that $i\co T\to Q_T^-$ is a $(k,\ell)$-quasi-isometric
embedding.  By enlarging $\ell$ we may suppose that for every $T$,
every point of $Q_T^-$ lies within $\ell$ of some point of $i(T)$.
To do this, take $\ell$ at least as large as the diameter
of the subset of $Q$ obtained by deleting the volume $V/2$ horoball
neighborhoods of the cusps of $Q$.  (The factor of $1/2$ comes from
the fact that a cusp of $Q_T$ may be a cusp of two different
$W_0$-translates of $Q$.  A cusp of $Q_T$ cannot be a
cusp of more than two $W_0$-translates of $Q$, because the doubling
walls are ultraparallel.)  We have proven our
claim.

Now, suppose $T$ and $T'$ are subtrees of $\Gamma$ with $Q_T$ and
$Q_{T'}$ isometric.  Then $Q_T^-$ and $Q_{T'}^-$ are isometric.  Since
$T\to Q_T^-$ and $T'\to Q_{T'}^-$ are $(k,\ell)$-quasi-isometries,
there is a $(k^2,7k\ell)$-quasi-isometry $T\to T'$.
Plugging $(k^2,7k\ell)$ into
\fullref{lem-quasi-isometry-implies-isomorphism}, we obtain $L>0$
with the properties stated there.  

Consider $I$-vertex
subtrees $T$ of $\Gamma$ for which the branch points of $T$ lie at
distance${}\geq L$ in $\Gamma$.  If two such trees are not isomorphic
as abstract graphs, then their corresponding polyhedra cannot be
isometric.  
The number of isomorphism classes of
abstract trivalent trees with up to $\lfloor\frac{I-1}{L}\rfloor$
edges is bounded below by an exponential in
$\lfloor\frac{I-1}{L}\rfloor$ and hence by an exponential in $I$.
($\lfloor x\rfloor$ means the largest integer${}\leq x$.)  Therefore
we may choose for each $I\geq1$ a set $\mathcal{T}_I$ of $I$-vertex
subtrees of $\Gamma$, with distinct elements of $\mathcal{T}_I$ giving
non-isometric polyhedra, and $|\mathcal{T}_I|$ growing
exponentially with $I$.
\end{proof}

\bibliographystyle{gtart}
\bibliography{link}

\end{document}